\documentclass[a4paper,11pt,oneside,reqno]{amsart}

\usepackage{amsmath,amssymb,amsthm,mathtools}
\usepackage[pagewise]{lineno}
\mathtoolsset{showonlyrefs}
\usepackage{bm}
\usepackage{subfigure}
\usepackage{verbatim}
\usepackage{wrapfig}
\usepackage{ascmac}
\usepackage{cases}
\usepackage{autobreak}
\usepackage{comment}
\usepackage{cancel}
\usepackage[normalem]{ulem}
\usepackage{xcolor}
\usepackage{marginnote}
\usepackage{mparhack}
\usepackage[hidelinks]{hyperref}

\newif\ifshowcomments
\showcommentstrue

\makeatletter

\@addtoreset{equation}{section}
\makeatother

\newtheorem{theorem}{Theorem}[section]

\newtheorem{lemma}[theorem]{Lemma}

\newtheorem{assumption}[theorem]{Assumption}
\newtheorem{remark}[theorem]{Remark}
\newtheorem{definition}[theorem]{Definition}

\newtheorem{example}[theorem]{Example}

\newcounter{c}
\newcounter{d}
\newcounter{b}

\newcommand{\R}{\mathbb{R}}

\newcommand{\e}{\varepsilon}

\newcommand{\supp}{\mathrm{supp}}

\newcommand{\diver}{\operatorname{div}}

\newcommand{\dual}[2]{\left\langle #1,#2\right\rangle}
\newcommand{\M}{\mathcal{M}}
\newcommand{\KM}{\mathfrak{M}}

\begin{document}

\title[Delayed diffusion with measure-valued kernels]{Delayed diffusion with measure-valued kernels in nonlinear parabolic equations}
\author{Yuki Tsukamoto}
\date{}

\address{Tokyo University of Science, 162-8601, Tokyo, Japan}
\email{tsukamoto-yuki@rs.tus.ac.jp}

\begin{abstract}
We study nonlinear parabolic equations with delayed diffusion terms governed
by finite signed measure kernels. The atom of the kernel at the origin is
absorbed into the present-time operator, while the remaining part is treated
as a residual delay kernel. Under structural assumptions on the effective
present-time operators and a pathwise coercivity condition for the total
memory operator, we prove the existence and uniqueness of weak solutions and
their stability under weak-star convergence of the kernels. The stability
result covers collapsing delayed atoms, whose mass is transferred to the
present-time diffusion coefficient in the limit. We verify the assumptions
for \(p\)-Laplacian type examples, including separated kernels and a
regularized class of kernels reaching the origin.
\end{abstract}

\maketitle

\section{Introduction and main results}

Nonlinear parabolic equations with delay arise naturally in models where the
diffusive dynamics depend not only on the present state of the system but
also on its history. A prototype for the equations studied here is
\[
  u'(t)
  +a\mathcal C_p(u(t))
  +b\mathcal C_p(u(t-\tau))
  =f(t),
  \qquad
  \mathcal C_p(u)
  :=
  -\operatorname{div}
  \bigl(|\nabla u|^{p-2}\nabla u\bigr).
\]
Here the nonlinear diffusion operator itself is evaluated at the delayed
time \(t-\tau\). In the general problem considered below, this discrete
delayed contribution is replaced by a measure-valued term of the form
\[
  \int_{[0,r]}
  \mathcal C(\bar u(t-s))\,d\mu(s),
\]
where \(\bar u\) denotes the extension of \(u\) by its prescribed past
history, \(\mathcal C\) is a nonlinear spatial operator, and \(\mu\) is a
finite signed measure on the delay interval \([0,r]\). Taking
\(\mathcal C=\mathcal C_p\) and \(\mu=b\delta_\tau\) recovers the delayed
term in the prototype above, whereas an absolutely continuous measure yields
a distributed delay.

Delayed diffusion is analytically different from delay in a reaction or
source term. The usual monotonicity inequality for a nonlinear diffusion
operator pairs the difference of two operator values with the difference of
the corresponding states at the same time. When two solutions of a delayed
diffusion equation are compared, however, one encounters terms of the form
\[
  \left\langle
    \mathcal C(\bar u(t-s))-\mathcal C(\bar v(t-s)),
    u(t)-v(t)
  \right\rangle.
\]
The operator difference and the state difference are evaluated at different
times, so pointwise monotonicity of \(\mathcal C\) does not by itself provide
a useful sign. The main analytical issue is therefore to combine the
dissipation of the present-time operator with the contribution from the past
and obtain a comparison estimate at the level of entire trajectories.

Delay differential equations have a classical theory within the framework
of functional differential equations, for which the monograph by Hale and
Verduyn Lunel is a standard reference
\cite{HaleVerduynLunel1993IntroductionFunctionalDifferentialEquations}.
Equations with spatial variables are treated in the theory of partial
functional differential equations; see, for example, Wu
\cite{Wu1996TheoryApplicationsPartialFunctionalDifferentialEquations}.
Abstract evolution equations with Volterra-type memory are discussed by
Pr\"uss
\cite{Pruss1993EvolutionaryIntegralEquationsApplications}.
Quasilinear parabolic equations with delays in the highest-order spatial
derivatives were studied by Yong and Pan
\cite{YongPan1993QuasilinearParabolicDelaysHighestOrder}
in a framework allowing a possibly countable family of discrete delays and
finitely many distributed delays.

Delayed reaction-diffusion equations have been studied extensively,
particularly in connection with travelling waves. Representative results
include travelling waves for delayed reaction-diffusion systems
\cite{BoumenirNguyen2008PerronMonotoneIterationTravelingWaves,
TianLin2010TravelingWavesDelayedReactionDiffusionSystems},
positive travelling fronts for systems with distributed delay
\cite{FariaTrofimchuk2010PositiveTravellingFrontsDistributedDelay},
and travelling waves for bistable equations with delay
\cite{TrofimchukVolpert2018TravelingWavesBistableDelay}.
In much of this literature, however, the delay enters through a lower-order
reaction term, while the diffusion operator acts on the present state. 
This is also the case in the delayed \(p\)-Laplacian population model
studied in
\cite{YangDeng2017QualitativePropertiesPLaplacianPopulationModelDelay}:
the \(p\)-Laplacian acts on the present state, while the delay enters through
the population growth term. Barker and Minh have studied travelling waves
for equations with delay in both diffusion and reaction, but their diffusion
operator is linear
\cite{BarkerMinh2026TravelingWavesDelayedDiffusionReaction}.

Antontsev et al. studied evolution problems for the \(p\)- and
\(p(x)\)-Laplacians with Volterra-type memory
\cite{AntontsevShmarevSimsenSimsen2016EvolutionPLaplacianMemory,
AntontsevShmarevSimsenSimsen2019DifferentialInclusionPXMemory}.
These works assume regular Volterra kernel functions and do not cover
general finite signed measures, including singular and atomic kernels,
stability under weak-star convergence of the total kernels, or limits
in which delayed atoms collapse to the origin. Thus, the corresponding
measure-valued theory for nonlinear diffusion in the principal part
remains much less developed.

By contrast, nonlinear parabolic equations without delay have a broad and
well-established theory. Abstract evolution equations governed by monotone
operators are treated systematically in
\cite{Barbu2010NonlinearDifferentialEquationsMonotoneTypesBanachSpaces,
Showalter1997MonotoneOperatorsBanachSpaceNonlinearPDE}.
For the foundational theory of degenerate parabolic equations of
\(p\)-Laplacian type, we refer to
\cite{DiBenedetto1993DegenerateParabolicEquations}.
A substantial regularity theory is also available; see, for example,
\cite{Boegelein2014GlobalCalderonZygmundNonlinearParabolicSystems,
CianchiMazya2020SecondOrderRegularityParabolicPLaplaceProblems}.
Equations involving measure data are studied, for example, in
\cite{ByunParkShin2021GlobalRegularityDegenerateSingularParabolicMeasureData}.
Asymptotic behaviour and long-time dynamics are considered in
\cite{CruzUribeMoenShao2025DegenerateParabolicPLaplacianEquations} and
\cite{FolinoPlazaStrani2021LongTimeDynamicsPLaplacianBistable},
respectively. The standard monotonicity-based well-posedness theory,
however, does not directly extend to nonlinear diffusion operators evaluated
along the past history.

Measure-valued kernels provide a common framework for discrete and
distributed delays. Weak-star convergence also provides a natural setting
for limits in which the weights and locations of atomic delays may vary.
Continuous dependence on coefficients and delay parameters in linear
parabolic equations has been studied by Kryspin and Mierczy\'nski
\cite{KryspinMierczynski2023ParabolicDifferentialEquationsBoundedDelay,
KryspinMierczynski2024SystemsParabolicEquationsDelays}.
Measures also arise as kernels of nonlocal time-delay terms in semilinear
parabolic control problems
\cite{CasasMateosTroeltzsch2018MeasureControlNonlocalTimeDelay}.

The behaviour of the delayed part of the kernel near the origin is especially
important. If its support is contained in \([\tau_*,r]\) for some
\(\tau_*>0\), then on each time interval shorter than \(\tau_*\) the delayed
term is determined entirely by the prescribed history and by the solution
already constructed on preceding intervals. The equation can then be solved
successively by the method of steps. This construction is no longer
available when the support of the delayed part accumulates at the origin,
since the memory term then samples states at times arbitrarily close to the
present.

The failure of the method of steps is not the only difficulty associated
with delayed diffusion. Even for a fixed positive delay in the principal
spatial part, well-posedness can depend on the spatial order of the
present-time regularization. Khusainov, Pokojovy, and Racke proved
well-posedness for a suitably regularized heat equation of this type and
showed that regularization of lower spatial order is, in general, insufficient
\cite{KhusainovPokojovyRacke2015StrongMildExtrapolatedL2SolutionsHeatEquationConstantDelay}.

Mass near the origin does not by itself preclude well-posedness. Ishizaka
established well-posedness and kernel stability for linear diffusion
equations with memory represented by an arbitrary finite nonnegative measure
\cite{Ishizaka2026WellPosednessKernelStabilityMixedMeasureMemory}.
In a semilinear setting with signed measure-valued delay, Shikhman obtained
uniqueness under a one-sided Lipschitz condition
\cite{Shikhman2026KernelRobustDynamicsMeasureValuedDelay}.
These results suggest that the relevant issue is not the proximity of the
kernel to the origin alone, but the structure of the combined present-time
and memory operator.

In this paper, we study nonlinear delayed diffusion with finite signed
measure kernels. If the kernel has an atom at the origin, its contribution
is instantaneous and is therefore absorbed into an effective present-time
operator. The remaining measure is treated as the residual delay kernel.
Although the residual kernel has no atom at the origin, its support may
still reach the origin. Thus, the framework covers both kernels whose
supports are separated from the origin and kernels whose supports contain
positive delays arbitrarily close to zero. The key structural assumption in
our well-posedness theory is a pathwise coercivity condition on the total
memory operator for pairs of trajectories with the same prescribed past.
This condition concerns the combined present-time and delayed contributions,
rather than requiring monotonicity of \(\mathcal C\) alone. Under this
condition, together with uniform \(p\)-monotonicity, growth, and coercivity
assumptions on the effective present-time operators and suitable growth and
local Lipschitz conditions on \(\mathcal C\), we establish existence and
uniqueness of weak solutions, together with the corresponding a priori
estimates.

We also establish stability under weak-star convergence of the total memory
kernels. It is essential here to regard each kernel as a whole, rather than
to require separate convergence of its atom at the origin and its residual
part. Indeed, an atom located at a positive delay may approach the origin
and become an instantaneous contribution in the limit. The stability
theorem captures precisely this phenomenon: the mass of such an atom is
absorbed into the effective present-time operator, and the corresponding
weak solutions converge in the natural energy space.

Finally, we verify the abstract assumptions for two classes of problems of
\(p\)-Laplacian type. In the first, the residual kernels are separated from
the origin. The second is a regularized class in which the residual kernels
may reach the origin; in this setting, the pathwise coercivity condition
follows from a smallness assumption on their total variation.

\subsection{Setting and assumptions}

Let $\Omega\subset\R^d$ be a bounded Lipschitz domain. We fix
\[
 2\le p<\infty,\qquad p'=\frac{p}{p-1}.
\]
Unless otherwise stated, we write
\[
 W^{1,p}_0=W^{1,p}_0(\Omega),\qquad L^2=L^2(\Omega),\qquad W^{-1,p'}=W^{-1,p'}(\Omega).
\]
The duality between $W^{-1,p'}$ and $W^{1,p}_0$ is denoted by $\dual{\cdot}{\cdot}$.

Let $T>0$ and $r>0$. We prescribe the initial datum and the past history by
\[
 u_0\in L^2,\qquad
 \phi\in L^p(-r,0;W^{1,p}_0).
\]
For a function $u$ on $[0,T]$, we define its history extension $\bar u$ by
\[
 \bar u(t)=
 \begin{cases}
  \phi(t)& -r<t<0,\\
  u(t)& 0\le t\le T.
 \end{cases}
\]
The value assigned at the single point $t=0$ does not affect any time
integral. The initial condition is imposed separately as $u(0)=u_0$ in $L^2$. 
In particular, no compatibility condition between $\phi$ and $u_0$
is required.

We denote by $\M([0,r])$ the space of finite signed Borel measures on $[0,r]$. 
For $\eta\in\M([0,r])$, $|\eta|$ denotes its total variation
measure, and we set
\[
 \|\eta\|_{\M([0,r])}:=|\eta|([0,r]).
\]
We write $\eta_n\stackrel{*}{\rightharpoonup}\eta$ in $\M([0,r])$ if
\[
 \int_{[0,r]}\chi\,d\eta_n\to\int_{[0,r]}\chi\,d\eta
 \qquad\text{for every }\chi\in C([0,r]).
\]
This convergence is referred to as weak-star convergence in $\M([0,r])$.

Let $\mathcal C:W^{1,p}_0\to W^{-1,p'}$ be the nonlinear spatial operator appearing in the delay term.
We start from the form
\[
 u'(t)+\mathcal A_{\rm raw}(u(t))
 +\int_{[0,r]}\mathcal C(\bar u(t-s))\,d\mu(s)
 =f(t),
 \qquad 0<t<T.
\]
Here $\mu\in\M([0,r])$ is a measure kernel. We decompose it as
\[
 \mu=\kappa\delta_0+\nu,\qquad
 \kappa=\mu(\{0\}),\qquad
 |\nu|(\{0\})=0,
\]
where $\delta_0$ denotes the Dirac measure at the origin. Then
\[
 \int_{[0,r]}\mathcal C(\bar u(t-s))\,d\mu(s)=
 \kappa\mathcal C(u(t))+\int_{(0,r]}\mathcal C(\bar u(t-s))\,d\nu(s).
\]
The first term comes from the atom at $s=0$; it is evaluated at the present
time and hence is not a delay term. We absorb it into the present-time operator by setting
\[
 \mathcal A_\kappa(u):=\mathcal A_{\rm raw}(u)+\kappa\mathcal C(u).
\]
The coefficient $\kappa$ may have either sign. With this convention,
$\nu$ is the residual delay kernel. We fix a closed interval $I\subset\R$ of
admissible values of $\kappa$ and require uniform estimates for the
corresponding effective present-time operators.

\begin{assumption}\label{ass:current}
For each $\kappa\in I$, the operator
\[
 \mathcal A_\kappa:W^{1,p}_0\to W^{-1,p'}
\]
is hemicontinuous.  There exist constants $\alpha_I>0$ and $C_I>0$ such
that, for all $\kappa\in I$ and all $u,v\in W^{1,p}_0$,
\begin{align}
 \dual{\mathcal A_\kappa(u)-\mathcal A_\kappa(v)}{u-v}
 &\ge \alpha_I\|u-v\|_{W^{1,p}_0}^p,
 \label{eq:uniform-p-monotone}\\
 \|\mathcal A_\kappa(u)\|_{W^{-1,p'}}
 &\le C_I\bigl(1+\|u\|_{W^{1,p}_0}^{p-1}\bigr),
 \label{eq:Akappa-growth}\\
 \dual{\mathcal A_\kappa(u)}{u}
 &\ge \alpha_I\|u\|_{W^{1,p}_0}^p-C_I.
 \label{eq:Akappa-coercive}
\end{align}
\end{assumption}
We shall refer to \eqref{eq:uniform-p-monotone} as uniform \(p\)-monotonicity.
This estimate will be used below together with the coercivity condition
imposed on the total memory operator.

We next impose assumptions on the flux operator appearing in the delayed term.
\begin{assumption}\label{ass:delayed-flux}
The operator
\[
 \mathcal C:W^{1,p}_0\to W^{-1,p'}
\]
is continuous.  There exists a constant $C_{\mathcal C}>0$ such that, for every
$u\in W^{1,p}_0$,
\begin{equation}
 \|\mathcal C(u)\|_{W^{-1,p'}}
 \le C_{\mathcal C}\bigl(1+\|u\|_{W^{1,p}_0}^{p-1}\bigr).
 \label{eq:C-growth}
\end{equation}
Moreover, for every bounded interval $J\subset\R$ and every $R>0$, there
exists a constant $L_{J,R}>0$ such that, whenever
\[
 \|w\|_{L^p(J;W^{1,p}_0)}
 +
 \|z\|_{L^p(J;W^{1,p}_0)}
 \le R,
\]
one has
\begin{equation}
 \|\mathcal C(w)-\mathcal C(z)\|_{L^{p'}(J;W^{-1,p'})}
 \le
 L_{J,R}\|w-z\|_{L^p(J;W^{1,p}_0)}.
 \label{eq:C-local-lip}
\end{equation}
\end{assumption}

\begin{remark}\label{rem:p-laplace-flux}
A basic example covered by Assumption \ref{ass:delayed-flux} is the
$p$-Laplacian type operator
\[
 \mathcal C(u)=-\diver(|\nabla u|^{p-2}\nabla u),
 \qquad p\ge2.
\]
For this operator, the growth estimate and continuity follow from the usual
$p$-Laplacian estimates. The local Lipschitz estimate \eqref{eq:C-local-lip} follows from the pointwise inequality
\[
 \bigl||\xi|^{p-2}\xi-|\eta|^{p-2}\eta\bigr|
 \le c_p(|\xi|+|\eta|)^{p-2}|\xi-\eta|
\]
together with Hölder's inequality in space and time. We verify this example in Section~\ref{sec:examples}.
\end{remark}

We now introduce the delay operator associated with a residual kernel.
Let $\nu\in\M([0,r])$ satisfy $|\nu|(\{0\})=0$.
For $w\in L^p(-r,T;W^{1,p}_0)$, we set
\begin{equation}
 \mathcal B_\nu w
 :=
 \int_{(0,r]}\mathcal C(w(\cdot-s))\,d\nu(s)
 \quad\text{in }L^{p'}(0,T;W^{-1,p'}).
 \label{eq:Bnu-def}
\end{equation}
Equivalently, after pairing with any fixed $v\in W^{1,p}_0$, one has, for a.e. $t\in(0,T)$,
\begin{equation}
 \dual{(\mathcal B_\nu w)(t)}{v}
 =
 \int_{(0,r]}
 \dual{\mathcal C(w(t-s))}{v}\,d\nu(s).
 \label{eq:Bnu-dual-def}
\end{equation}
The growth estimate for $\mathcal C$ gives
\begin{equation}
 \|\mathcal B_\nu w\|_{L^{p'}(0,T;W^{-1,p'})}
 \le
 C\|\nu\|_{\M([0,r])}
 \bigl(1+\|w\|_{L^p(-r,T;W^{1,p}_0)}^{p-1}\bigr),
 \label{eq:Bnu-growth}
\end{equation}
where $C$ depends only on $T$, $r$, $p$, and $C_{\mathcal C}$.

Let $\kappa\in I$ and let $\nu\in\M([0,r])$ satisfy
$|\nu|(\{0\})=0$. Given
\[
 f\in L^{p'}(0,T;W^{-1,p'}),
\]
we consider the problem
\begin{equation}
 u'(t)+\mathcal A_\kappa(u(t))+(\mathcal B_\nu\bar u)(t)=f(t),
 \qquad 0<t<T,
 \label{eq:effective-problem}
\end{equation}
together with the initial datum $u_0$ and the past history $\phi$.
\begin{definition}\label{def:weak-solution}
A function $u$ is a weak solution of \eqref{eq:effective-problem} on
$[0,T]$ with initial datum $u_0$ and past history $\phi$ if
\[
 u\in L^p(0,T;W^{1,p}_0)\cap C([0,T];L^2),
 \qquad
 u'\in L^{p'}(0,T;W^{-1,p'}),
\]
$u(0)=u_0$ in $L^2$, and
\begin{equation}
 u'+\mathcal A_\kappa(u)+\mathcal B_\nu\bar u=f
 \quad\text{in }L^{p'}(0,T;W^{-1,p'}).
 \label{eq:weak-equation}
\end{equation}
Equivalently, for every $\psi\in L^p(0,T;W^{1,p}_0)$,
\begin{align}
 &\int_0^T\dual{u'(t)}{\psi(t)}\,dt
 +\int_0^T\dual{\mathcal A_\kappa(u(t))}{\psi(t)}\,dt
 \notag\\
 +&
 \int_0^T\dual{(\mathcal B_\nu\bar u)(t)}{\psi(t)}\,dt
 =
 \int_0^T\dual{f(t)}{\psi(t)}\,dt.
 \label{eq:weak-formulation}
\end{align}
\end{definition}

We first distinguish kernels whose support stays a positive distance away
from the origin. A measure $\rho\in\M([0,r])$ is said to be separated from
zero if there exists $\tau_*>0$ such that $\supp|\rho|\subset[\tau_*,r]$.
For such kernels, the delayed argument $t-s$ is separated from the present
time $t$ by at least $\tau_*$. Thus the delay term does not sample the
solution at times arbitrarily close to the present. 

For $\nu\in\M([0,r])$ with $|\nu|(\{0\})=0$, we set
\begin{equation}
 \nu_\e:=\nu|_{[\e,r]},
 \qquad 0<\e\le r.
 \label{eq:nu-eps-def}
\end{equation}
Then $\nu_\e$ is separated from zero, and
\[
 \|\nu-\nu_\e\|_{\M([0,r])}
 \le |\nu|((0,\e])\to0
 \qquad\text{as }\e\to 0.
\]

We write
\[
\begin{aligned}
 \mathcal X_T
 &:=
 C([0,T];L^2)\cap L^p(0,T;W^{1,p}_0),\\
 \|u\|_{\mathcal X_T}
 &:=
 \|u\|_{C([0,T];L^2)}
 +\|u\|_{L^p(0,T;W^{1,p}_0)}.
\end{aligned}
\]

The main results require the following coercivity condition on the total
memory operator.

\begin{assumption}\label{ass:pathwise-coercivity}
Let $\KM\subset \M([0,r])$. For $\mu\in\KM$, set
\[
 \kappa_\mu:=\mu(\{0\}),\qquad
 \nu_\mu:=\mu-\kappa_\mu\delta_0 .
\]
We assume that $\kappa_\mu\in I$ for every $\mu\in\KM$.

For $w\in L^p(-r,T;W^{1,p}_0)$ with $w|_{[0,T]}\in\mathcal X_T$, define
\[
 (\mathcal F_\mu w)(t)
 :=
 \mathcal A_{\kappa_\mu}(w(t))
 +
 (\mathcal B_{\nu_\mu}w)(t),
 \qquad 0<t<T.
\]
There exists a constant $\gamma>0$ such that, for every
$\mu\in\KM$, every $0<t\le T$, and every
$w,z\in L^p(-r,T;W^{1,p}_0)$ satisfying
\[
 w|_{[0,T]},\,z|_{[0,T]}\in\mathcal X_T,
 \qquad
 w=z \quad\text{a.e. on }(-r,0),
\]
one has
\begin{equation}
\int_0^t
\dual{
 (\mathcal F_\mu w)(\tau)-(\mathcal F_\mu z)(\tau)
}{
 w(\tau)-z(\tau)
}
\,d\tau
\ge
\gamma
\int_0^t
\|w(\tau)-z(\tau)\|_{W^{1,p}_0}^p\,d\tau .
\label{eq:pathwise-coercivity}
\end{equation}
\end{assumption}

This condition is imposed on the total memory operator, rather than on the
delayed flux \(\mathcal C\) alone.

\subsection{Main results}

We first state the well-posedness result for general residual kernels; their
supports are not required to be separated from the origin.

\begin{theorem}
\label{thm:extension-no-atom}
Let $\kappa\in I$ and let $\nu\in\M([0,r])$ satisfy $|\nu|(\{0\})=0$.
Set
\[
 \mu:=\kappa\delta_0+\nu .
\]
Suppose that Assumptions~\ref{ass:current}, \ref{ass:delayed-flux}, and
\ref{ass:pathwise-coercivity} hold, where in
Assumption~\ref{ass:pathwise-coercivity} we take $\KM=\{\mu\}$.
Let
\[
 f\in L^{p'}(0,T;W^{-1,p'}),\qquad
 u_0\in L^2,\qquad
 \phi\in L^p(-r,0;W^{1,p}_0).
\]
Then the problem
\begin{equation}
 u'(t)+\mathcal A_\kappa(u(t))+(\mathcal B_\nu\bar u)(t)=f(t),
 \qquad 0<t<T,
 \label{eq:no-atom-problem}
\end{equation}
with initial datum \(u_0\) and past history \(\phi\) has a unique weak
solution \(u\in\mathcal X_T\). Furthermore,
\begin{align}
 &\|u\|_{C([0,T];L^2)}^2
 +\|u\|_{L^p(0,T;W^{1,p}_0)}^p
 +\|u'\|_{L^{p'}(0,T;W^{-1,p'})}^{p'}
 \notag\\
 \le{}&
 C_T\left(
 1+\|u_0\|_{L^2}^2
 +\|\phi\|_{L^p(-r,0;W^{1,p}_0)}^p
 +\|f\|_{L^{p'}(0,T;W^{-1,p'})}^{p'}
 \right).
 \label{eq:no-atom-estimate}
\end{align}
Here \(C_T\) depends only on \(T\), \(r\), \(p\), \(\alpha_I\), \(C_I\),
\(C_{\mathcal C}\), and \(\nu\).
\end{theorem}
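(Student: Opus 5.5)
Uniqueness follows directly from the pathwise coercivity, and existence comes from approximating $\nu$ by the truncations $\nu_\e$, for which the method of steps applies, and passing to the limit with the help of Assumption~\ref{ass:pathwise-coercivity}.

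\emph{Uniqueness.} Let $u,v$ be two weak solutions with the same data. Their extensions $\bar u,\bar v$ coincide on $(-r,0)$, so $w=\bar u$, $z=\bar v$ are admissible in Assumption~\ref{ass:pathwise-coercivity} with $\KM=\{\mu\}$. Subtract the two equations and test with $u-v$ on $(0,t)$, using the Lions--Magenes formula for $\|u-v\|_{L^2}^2$. This gives
\[
 \tfrac12\|u(t)-v(t)\|_{L^2}^2+\gamma\int_0^t\|u-v\|_{W^{1,p}_0}^p\,d\tau\le0 .
\]
Hence $u=v$. The same computation, with a nonzero difference of right-hand sides, will also give a continuous-dependence estimate on $f$.

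\emph{Approximate problems.} Since $\nu_\e$ is separated from zero with $\tau_*=\e$, on each interval $[k\e,(k+1)\e]$ the term $\mathcal B_{\nu_\e}\bar u$ depends only on $\phi$ and on $u$ restricted to earlier intervals. By \eqref{eq:Bnu-growth} it is a known element of $L^{p'}(W^{-1,p'})$. The classical monotone-operator theory for $u'+\mathcal A_\kappa(u)=g$ under Assumption~\ref{ass:current} (hemicontinuous, monotone, coercive, bounded) then gives a unique solution on each step. Gluing the steps yields $u_\e\in\mathcal X_T$ with $u_\e'\in L^{p'}(W^{-1,p'})$.

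\emph{Uniform a priori estimate.} This is the main obstacle: \eqref{eq:Akappa-coercive} only controls the present-time part, while $\mathcal B_\nu$ is not small. I would apply \eqref{eq:pathwise-coercivity} with $z$ the solution $v$ of the \emph{delay-free reference problem}. For this, take $z=\bar v$, where $v$ solves the problem with the same $\phi$ and right-hand side $f$ adjusted so that $v$ is bounded by the data. Simpler still is to take $z=\bar 0$ only if $\phi=0$, so the reference trajectory is needed in general. Testing the difference with $u-v$ gives
\[
 \tfrac12\|u(t)-v(t)\|^2+\gamma\int_0^t\|u-v\|^p\le \tfrac12\|u_0-v(0)\|^2+\int_0^t\langle f-\mathcal F_\mu \bar v,u-v\rangle .
\]
The terms involving $\mathcal F_\mu\bar v$ are bounded via \eqref{eq:Akappa-growth} and \eqref{eq:Bnu-growth}. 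Young's inequality then gives the bound in \eqref{eq:no-atom-estimate} for $u-v$, and therefore for $u$.

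To use this for $u_\e$, I need coercivity for $\nu_\e$, which Assumption~\ref{ass:pathwise-coercivity} does not grant directly. I would write $\mathcal B_{\nu_\e}=\mathcal B_\nu-\mathcal B_{\nu-\nu_\e}$. The error term is controlled by the local Lipschitz estimate \eqref{eq:C-local-lip} times $|\nu|((0,\e])\to0$, provided the trajectories stay in a bounded set. Closing this requires a bootstrap or continuation argument on $t$, and this is the delicate point. The estimate for $u_\e'$ then follows from the equation.

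\emph{Convergence.} Apply the uniqueness computation to $u_\e-u_{\e'}$ with the coercivity for $\mu$, treating $\mathcal B_{\nu-\nu_\e}$ as a perturbation. The uniform bounds and \eqref{eq:C-local-lip} bound that perturbation by $C|\nu|((0,\max(\e,\e')])$. This shows $(u_\e)$ is Cauchy in $\mathcal X_T$. Strong convergence in $L^p(W^{1,p}_0)$ together with \eqref{eq:C-local-lip} and continuity of $\mathcal C$ lets me pass to the limit in $\mathcal B_{\nu_\e}\bar u_\e\to\mathcal B_\nu\bar u$. The Minty trick, or directly the strong convergence with hemicontinuity, handles $\mathcal A_\kappa$. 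Weak-star compactness handles $u_\e'$. The limit is the weak solution, and \eqref{eq:no-atom-estimate} follows by lower semicontinuity.
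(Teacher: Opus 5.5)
Your overall architecture matches the paper's: truncation $\nu_\e$, method of steps, rewriting the truncated equation with the full kernel $\mu$ plus the error $\mathcal B_{\nu-\nu_\e}\bar u_\e$, a Cauchy estimate from Assumption~\ref{ass:pathwise-coercivity}, passage to the limit, and uniqueness by coercivity. The genuine difference is how you get the uniform bound on $u_\e$.

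The paper (Lemma~\ref{lem:uniform-truncation-estimate}) does not use coercivity at that step. It splits $\nu_\e=\nu_\e|_{[\delta,r]}+\nu_\e|_{(0,\delta)}$ and takes time steps shorter than $\delta$, so the first part is known data from earlier steps. The second part is absorbed because its mass is at most $|\nu|((0,\delta])$. Thus $\mathcal B_\nu$ never needs to be small, only its near-origin part.

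Your coercivity route also works, and it is simpler than you fear.
\begin{itemize}
\item The path $z$ equal to $\phi$ on $(-r,0)$ and to $0$ on $[0,T]$ is admissible in Assumption~\ref{ass:pathwise-coercivity} for every $\phi$, since no continuity at $t=0$ is required. So no reference problem is needed. Note also that with a nonzero $v$ your display would be missing $-v'$ on the right-hand side.
\item No bootstrap is needed. Use \eqref{eq:Bnu-growth} instead of \eqref{eq:C-local-lip}, together with causality and Young's inequality:
\[
 \Bigl|\int_0^t\dual{(\mathcal B_{\nu-\nu_\e}\bar u_\e)(\tau)}{u_\e(\tau)}\,d\tau\Bigr|
 \le C\,|\nu|((0,\e])\Bigl(1+\|\phi\|_{L^p(-r,0;W^{1,p}_0)}^{p}+\|u_\e\|_{L^p(0,t;W^{1,p}_0)}^{p}\Bigr).
\]
This is absorbed into $\gamma\int_0^t\|u_\e\|_{W^{1,p}_0}^p\,d\tau$ once $C|\nu|((0,\e])\le\gamma/4$. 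The Lipschitz estimate is only needed when passing to the limit in $\mathcal B_\nu\bar u_\e$.
\end{itemize}

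What each route buys:
\begin{itemize}
\item Your bound depends on $\nu$ only through $\|\nu\|_{\M([0,r])}$. It is therefore uniform over kernel families with a common $\gamma$ and bounded mass. Collapsing atoms are an example: there $|\nu_n|((0,\delta])$ is not uniformly small, so the constant of Lemma~\ref{lem:uniform-truncation-estimate} is not uniform in $n$.
\item However, your bound depends on $\gamma$, which is not among the dependencies of $C_T$ listed in the statement.
\item The paper's bound is $\gamma$-free and holds for every $\e\in(0,r]$ without invoking Assumption~\ref{ass:pathwise-coercivity}. It yields exactly the stated dependence of $C_T$.
\end{itemize}
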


The next theorem gives stability with respect to weak-star convergence of
total memory kernels.

\begin{theorem}
\label{thm:weakstar-stability}
Let $\mu_n,\mu\in\M([0,r])$. Set
\[
 \kappa_n:=\mu_n(\{0\}),\qquad
 \nu_n:=\mu_n-\kappa_n\delta_0,
\]
and
\[
 \kappa:=\mu(\{0\}),\qquad
 \nu:=\mu-\kappa\delta_0.
\]
Assume that
\[
 \kappa_n,\kappa\in I,
 \qquad
 \mu_n\stackrel{*}{\rightharpoonup}\mu
 \quad\text{in }\M([0,r]),
 \qquad
 \sup_n\|\mu_n\|_{\M([0,r])}<\infty .
\]
Suppose that Assumptions~\ref{ass:current}, \ref{ass:delayed-flux}, and
\ref{ass:pathwise-coercivity} hold, where in
Assumption~\ref{ass:pathwise-coercivity} we take
\[
 \KM=\{\mu\}\cup\{\mu_n:n\in\mathbb N\}.
\]
Let the data satisfy
\[
\begin{aligned}
 u_{0,n}&\to u_0
 &&\text{in }L^2,\\
 \phi_n&\to\phi
 &&\text{in }L^p(-r,0;W^{1,p}_0),\\
 f_n&\to f
 &&\text{in }L^{p'}(0,T;W^{-1,p'}).
\end{aligned}
\]
Let $u_n$ be the weak solution of
\begin{equation}
 u_n'(t)+\mathcal A_{\kappa_n}(u_n(t))
 +(\mathcal B_{\nu_n}\bar u_n)(t)=f_n(t),
 \qquad 0<t<T,
 \label{eq:weakstar-problem-n}
\end{equation}
with initial datum $u_{0,n}$ and past history $\phi_n$. Let $u$ be the weak
solution of
\begin{equation}
 u'(t)+\mathcal A_{\kappa}(u(t))
 +(\mathcal B_{\nu}\bar u)(t)=f(t),
 \qquad 0<t<T,
 \label{eq:weakstar-problem-limit}
\end{equation}
with initial datum $u_0$ and past history $\phi$. Then
\[
 u_n\to u
 \quad\text{in }\mathcal X_T.
\]
\end{theorem}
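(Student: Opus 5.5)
The plan is to compare $u_n$ with $u$ directly, using the pathwise coercivity \eqref{eq:pathwise-coercivity} for the fixed kernel $\mu_n$. The kernel mismatch is placed in a remainder term that is evaluated only along the limit solution $u$.

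\textbf{Uniform bounds.} By Theorem~\ref{thm:extension-no-atom}, each $u_n$ exists and is unique. The constant in \eqref{eq:no-atom-estimate} depends on $\nu$ only through $\|\nu\|_{\M([0,r])}$, via \eqref{eq:Bnu-growth}, and $\|\nu_n\|_{\M([0,r])}\le\|\mu_n\|_{\M([0,r])}$ is bounded. Since the data converge, $u_n$ is therefore bounded in $\mathcal X_T$, and $\bar u_n$ is bounded in $L^p(-r,T;W^{1,p}_0)$.

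\textbf{The comparison function.} Let $w$ be the solution of the $n$-th problem with the limit data $(u_0,\phi,f)$; call it $\tilde u_n$. Write $\mathcal F_n:=\mathcal F_{\mu_n}$, which acts on trajectories extended by the history. By the triangle inequality,
\[
\|u_n-u\|_{\mathcal X_T}\le\|u_n-\tilde u_n\|_{\mathcal X_T}+\|\tilde u_n-u\|_{\mathcal X_T}.
\]

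\textbf{The first term.} The difference $e=u_n-\tilde u_n$ satisfies
\[
e'+\mathcal F_n\bar u_n-\mathcal F_n\bar{\tilde u}_n=f_n-f .
\]
The histories differ, so \eqref{eq:pathwise-coercivity} does not apply directly. To fix this, replace $\bar{\tilde u}_n$ by $z$, which equals $\phi_n$ on $(-r,0)$ and $\tilde u_n$ on $[0,T]$. The resulting error
\[
\mathcal B_{\nu_n}\bar{\tilde u}_n-\mathcal B_{\nu_n}z
\]
involves only times $t-s<0$. By the local Lipschitz bound \eqref{eq:C-local-lip} on $J=(-r,0)$, it is bounded in $L^{p'}(0,T;W^{-1,p'})$ by
\[
C\sup_n\|\nu_n\|\,\|\phi_n-\phi\|_{L^p(-r,0;W^{1,p}_0)}\to0 .
\]
To see this, use the Minkowski integral inequality to write $\|\int \cdot\,d\nu_n\|_{L^{p'}}\le\int\|\cdot\|_{L^{p'}}\,d|\nu_n|$, and apply \eqref{eq:C-local-lip} for each fixed $s$. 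Testing with $e$ and applying \eqref{eq:pathwise-coercivity} on $(0,t)$ gives
\[
\tfrac12\|e(t)\|_{L^2}^2+\gamma\int_0^t\|e\|^p\le\tfrac12\|u_{0,n}-u_0\|^2+\int_0^t\langle g_n,e\rangle ,
\]
where $g_n\to0$ in $L^{p'}(0,T;W^{-1,p'})$. Young's inequality then gives $\|e\|_{\mathcal X_T}\to0$.

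\textbf{The second term: the main obstacle.} Now both $\tilde u_n$ and $u$ have history $\phi$, so the coercivity condition applies. Setting $e=\tilde u_n-u$,
\[
e'+\mathcal F_n\bar{\tilde u}_n-\mathcal F_n\bar u=-(\mathcal F_n\bar u-\mathcal F_\mu\bar u)=:-h_n .
\]
Testing with $e$ and using coercivity as before, it remains to show
\[
\sup_{t\le T}\Bigl|\int_0^t\langle h_n,e\rangle\Bigr|\to0 .
\]
Here
\[
h_n=(\mathcal A_{\kappa_n}-\mathcal A_\kappa)(u)+\int_{[0,r]}\mathcal C(\bar u(\cdot-s))\,d(\mu_n-\mu)(s)-(\kappa_n-\kappa)\mathcal C(u),
\]
which is meaningful when one writes $\mathcal A_\kappa=\mathcal A_{\rm raw}+\kappa\mathcal C$. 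I expect to prove $h_n\to0$ strongly in $L^{p'}(0,T;W^{-1,p'})$ in three steps.

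1. The map $s\mapsto\mathcal C(\bar u(\cdot-s))$ is continuous from $[0,r]$ into $L^{p'}(0,T;W^{-1,p'})$. To prove this, first approximate $\bar u$ by a function that is continuous in time. Continuity of translations in $L^p$ and \eqref{eq:C-local-lip} then give continuity of the composition. The point $s=0$ needs care, since $\bar u$ jumps from $\phi$ to $u$; this is harmless because translation is still continuous in $L^p(-r,T)$.
2. The term
\[
\int_{[0,r]}\mathcal C(\bar u(\cdot-s))\,d\mu_n(s)
\]
is a Bochner-type integral of a continuous Banach-valued function against $\mu_n$. Weak-star convergence $\mu_n\stackrel{*}{\rightharpoonup}\mu$ with bounded mass then gives norm convergence of these integrals. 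The argument approximates the continuous function uniformly by finite sums $\sum\chi_i(s)X_i$ with $\chi_i\in C([0,r])$. This step also absorbs collapsing atoms: an atom at $s_n\to0$ contributes $\mathcal C(\bar u(\cdot-s_n))\to\mathcal C(u)$.
3. The $\mathcal A$-parts have to be handled consistently with this identification. If $\mathcal A_{\rm raw}$ is fixed, the $\kappa$-terms cancel exactly against the total integral. In this form we need $\kappa_n$ bounded, which holds because $I$ is closed and, implicitly, bounded; I would check this in the paper's setup.

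Once $h_n\to0$ strongly, the estimate closes by Young's inequality, and $\|\tilde u_n-u\|_{\mathcal X_T}\to0$. Together with the first term, this gives $u_n\to u$ in $\mathcal X_T$.
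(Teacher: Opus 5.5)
Your argument is correct, but it is organized differently from the paper's. The paper introduces no intermediate solution. It compares $u_n$ directly with the path $\widetilde u_n$, equal to $\phi_n$ on $(-r,0)$ and to the limit solution $u$ on $[0,T]$. Then $\bar u_n$ and $\widetilde u_n$ share the history $\phi_n$, and \eqref{eq:pathwise-coercivity} applies for $\mu_n$. Both the history mismatch and the kernel mismatch go into the single remainder $G_n=f_n-f-(\mathcal F_{\mu_n}\widetilde u_n-\mathcal F_\mu\bar u)$. This tends to zero by Lemma~\ref{lem:fixed-path-total-kernel} applied with the moving paths $w_n=\widetilde u_n\to\bar u$. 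The lemma's first half is your local Lipschitz estimate for $\phi_n-\phi$, and its second half is your steps 1--2.

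Your auxiliary function $z$ is the same device. Your error term $\mathcal B_{\nu_n}\bar{\tilde u}_n-\mathcal B_{\nu_n}z$ depends only on $\phi_n-\phi$, not on $\tilde u_n$, so your two remainders $g_n$ and $-h_n$ add up exactly to the paper's $G_n$. The intermediate solution $\tilde u_n$ (kernel $\mu_n$, limit data) can therefore be dropped, saving an extra appeal to Theorem~\ref{thm:extension-no-atom}. What your splitting gains is a cleaner separation. You get a data-stability estimate uniform over $\KM$, and a kernel-stability estimate in which only the fixed limit path $\bar u$ appears. As a result, only the fixed-path half of the vector-valued weak-star argument is needed there.

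One claim should be removed. The constant $C_T$ in \eqref{eq:no-atom-estimate} does not depend on $\nu$ only through $\|\nu\|_{\M([0,r])}$. In Lemma~\ref{lem:uniform-truncation-estimate} it depends on the choice of $\delta$ with $K_1 m_\nu(\delta)^{p'}\le\frac12$, that is, on $|\nu|((0,\delta])$. For collapsing atoms $c_n\delta_{\sigma_n}$ with $c\neq0$, this modulus is not uniformly small in $n$.

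Your proof never actually uses those uniform bounds. Both comparison estimates close by Young's inequality, with right-hand sides involving only the data and the fixed solution $u$. The local Lipschitz constant on $J=(-r,0)$ needs only a bound on $\|\phi_n\|_{L^p(-r,0;W^{1,p}_0)}+\|\phi\|_{L^p(-r,0;W^{1,p}_0)}$.

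Likewise, step 3 needs no boundedness of $I$. One has directly $h_n=\int_{[0,r]}\mathcal C(\bar u(\cdot-s))\,d(\mu_n-\mu)(s)$, and in any case $|\kappa_n|\le\|\mu_n\|_{\M([0,r])}$.
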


This formulation also includes the collapse of delayed atoms to the origin;
see Example~\ref{ex:collapsing-atom}.

We briefly describe the proof strategy. We begin with residual kernels whose
support is separated from the origin. After dividing the time interval into
subintervals shorter than the minimum delay, the history term on each
subinterval depends only on the prescribed past and on the solution already
constructed. The equation can therefore be solved successively by applying
the standard theory of non-delayed monotone evolution equations.

For a general residual kernel \(\nu\), we consider the truncations
\[
 \nu_\varepsilon:=\nu|_{[\varepsilon,r]}.
\]
The corresponding kernels are separated from the origin. Uniform estimates
are obtained by splitting the delay measure into a part away from zero and a
near-origin part of small total variation. We then rewrite the truncated
equations using the original total kernel. The discarded part appears as an
error term that tends to zero in the natural dual space. The pathwise
coercivity condition for the total memory operator yields a Cauchy estimate
for the approximate solutions and hence allows us to pass to the limit.

For the stability theorem, the atom at the origin and the residual part are
kept together throughout the limiting argument. A vector-valued
weak-star convergence lemma identifies the limit of the full memory terms,
including any mass transferred from positive delays to the origin. Comparing
the solutions by means of the same pathwise coercivity estimate then gives
strong convergence in \(\mathcal X_T\).

The remainder of the paper is organized as follows.  In Section~\ref{sec:prelim}
we prove the basic estimates for the delay operator and construct solutions
for kernels separated from the origin.  Section~\ref{sec:extension} is devoted
to the proof of Theorems~\ref{thm:extension-no-atom} and
\ref{thm:weakstar-stability}. In Section~\ref{sec:examples} we
discuss examples of admissible kernels and verify the assumptions for
$p$-Laplacian type fluxes.

\section{Separated kernels}\label{sec:prelim}

In this section we prove well-posedness for kernels separated from the
origin. Let $\rho\in\M([0,r])$ satisfy $\supp|\rho|\subset[\tau_*,r]$
for some $\tau_*>0$. The key point is that, on any time interval
$J=(a,b)$ with $b-a<\tau_*$, one has
\[
 t-s<a
 \qquad
 (t\in J,\ s\in\supp|\rho|).
\]
Thus the delayed term only involves values from the past of the interval.

In the estimates below, \(C\) denotes a positive constant which may change
from line to line. Numbered constants \(C_1,C_2,\dots\) are fixed in the
estimates where they are introduced, and their dependence is indicated
explicitly.

We shall use the following local Lipschitz estimate for the delay operator.

\begin{lemma}\label{lem:Bnu-lip}
Let \(\eta\in\M([0,r])\).  For every \(R>0\), set
\[
 C_1:=L_{(0,T),R},
\]
where \(L_{(0,T),R}\) is the constant in \eqref{eq:C-local-lip} with
\(J=(0,T)\).  Then, whenever
\(w,z\in L^p(-r,T;W^{1,p}_0)\) satisfy
\[
 \|w\|_{L^p(-r,T;W^{1,p}_0)}
 +
 \|z\|_{L^p(-r,T;W^{1,p}_0)}
 \le R,
\]
one has
\begin{equation}
 \|\mathcal B_\eta w-\mathcal B_\eta z\|_{L^{p'}(0,T;W^{-1,p'})}
 \le
 C_1\|\eta\|_{\M([0,r])}
 \|w-z\|_{L^p(-r,T;W^{1,p}_0)}.
 \label{eq:Bnu-lip-section2}
\end{equation}
\end{lemma}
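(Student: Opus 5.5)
The plan is to reduce the estimate to the local Lipschitz bound \eqref{eq:C-local-lip}, applied for each fixed delay, and then to integrate against $|\eta|$ using Minkowski's integral inequality.

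For $s\in[0,r]$ write $w_s(t):=w(t-s)$ and $z_s(t):=z(t-s)$ for $t\in(0,T)$. Since $t-s$ ranges over $(-s,T-s)\subset(-r,T)$, the change of variables $\tau=t-s$ gives
\[
 \|w_s\|_{L^p(0,T;W^{1,p}_0)}\le\|w\|_{L^p(-r,T;W^{1,p}_0)},
 \qquad
 \|w_s-z_s\|_{L^p(0,T;W^{1,p}_0)}\le\|w-z\|_{L^p(-r,T;W^{1,p}_0)},
\]
and similarly for $z_s$. Hence $\|w_s\|_{L^p(0,T;W^{1,p}_0)}+\|z_s\|_{L^p(0,T;W^{1,p}_0)}\le R$ for every $s\in[0,r]$. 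Applying \eqref{eq:C-local-lip} with $J=(0,T)$ then yields, uniformly in $s$,
\[
 \|\mathcal C(w_s)-\mathcal C(z_s)\|_{L^{p'}(0,T;W^{-1,p'})}
 \le C_1\|w-z\|_{L^p(-r,T;W^{1,p}_0)}.
\]

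Next I will express $\mathcal B_\eta w-\mathcal B_\eta z$ as the Bochner integral
\[
 \int_{[0,r]}\bigl(\mathcal C(w_s)-\mathcal C(z_s)\bigr)\,d\eta(s)
\]
in the Banach space $L^{p'}(0,T;W^{-1,p'})$. This requires showing that $s\mapsto\mathcal C(w_s)$ is continuous from $[0,r]$ into that space, and I expect this to be the only delicate point. Translation is continuous in $L^p$, so $s\mapsto w_s$ is continuous into $L^p(0,T;W^{1,p}_0)$. The family $\{w_s\}$ is bounded by the estimate above, so \eqref{eq:C-local-lip} makes the Nemytskii map $v\mapsto\mathcal C(v)$ Lipschitz on that bounded set. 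Together these give the required continuity, hence Bochner integrability with respect to the finite measure $|\eta|$.

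Pairing this Bochner integral with $\psi\in L^p(0,T;W^{1,p}_0)$ and using Fubini's theorem shows that it coincides with the definition \eqref{eq:Bnu-dual-def}. Minkowski's inequality for Bochner integrals then gives
\[
 \|\mathcal B_\eta w-\mathcal B_\eta z\|_{L^{p'}(0,T;W^{-1,p'})}
 \le\int_{[0,r]}\|\mathcal C(w_s)-\mathcal C(z_s)\|_{L^{p'}(0,T;W^{-1,p'})}\,d|\eta|(s).
\]
Inserting the uniform bound from the previous paragraph gives $C_1\|\eta\|_{\M([0,r])}\|w-z\|_{L^p(-r,T;W^{1,p}_0)}$, which is \eqref{eq:Bnu-lip-section2}.
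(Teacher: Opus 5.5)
Your proposal is correct and follows the paper's proof: shift in time to get the uniform bound for each delay, apply \eqref{eq:C-local-lip} with $J=(0,T)$, and integrate against $|\eta|$. The paper simply invokes the total-variation bound for the vector-valued integral, whereas you also justify Bochner integrability through continuity of translations. One small correction: $\mathcal B_\eta$ is defined by an integral over $(0,r]$, not $[0,r]$, so write the Bochner integral over $(0,r]$. This does not affect the estimate, since $|\eta|((0,r])\le\|\eta\|_{\M([0,r])}$.
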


\begin{proof}
By the definition of $\mathcal B_\eta$ and the total variation estimate for
finite signed measures,
\[
\begin{aligned}
&\|\mathcal B_\eta w-\mathcal B_\eta z\|_{L^{p'}(0,T;W^{-1,p'})} \\
\le{}&
 \int_{(0,r]}
 \bigl\|
 \mathcal C(w(\cdot-s))-\mathcal C(z(\cdot-s))
 \bigr\|_{L^{p'}(0,T;W^{-1,p'})}
 \,d|\eta|(s).
\end{aligned}
\]
By the assumption, after shifting in time, for every $s\in(0,r]$,
\[
 \|w(\cdot-s)\|_{L^p(0,T;W^{1,p}_0)}
 +
 \|z(\cdot-s)\|_{L^p(0,T;W^{1,p}_0)}
 \le R.
\]
Applying \eqref{eq:C-local-lip} with $J=(0,T)$ gives
\[
 \bigl\|
 \mathcal C(w(\cdot-s))-\mathcal C(z(\cdot-s))
 \bigr\|_{L^{p'}(0,T;W^{-1,p'})}
 \le
C_1
 \|w(\cdot-s)-z(\cdot-s)\|_{L^p(0,T;W^{1,p}_0)}.
\]
The same shift gives
\[
 \|w(\cdot-s)-z(\cdot-s)\|_{L^p(0,T;W^{1,p}_0)}
 \le
 \|w-z\|_{L^p(-r,T;W^{1,p}_0)}.
\]
Substituting these bounds into the first estimate and using
\[
 |\eta|((0,r])\le \|\eta\|_{\M([0,r])},
\]
we obtain \eqref{eq:Bnu-lip-section2}. This proves the lemma.
\end{proof}

We shall also use the following standard result for the non-delayed
present-time equation.

\begin{theorem}\label{thm:nondelayed-problem}
Suppose that Assumption \ref{ass:current} holds.
Let $J=(a,b)$ be a bounded interval and let $\kappa\in I$. For every
\[
 h\in L^{p'}(J;W^{-1,p'}),\qquad y_a\in L^2,
\]
the problem
\begin{equation}
 y'(t)+\mathcal A_\kappa(y(t))=h(t),
 \qquad t\in J,
 \qquad y(a)=y_a,
 \label{eq:nondelayed-problem}
\end{equation}
has a unique solution satisfying
\[
 y\in L^p(J;W^{1,p}_0)\cap C(\overline J;L^2),
 \qquad
 y'\in L^{p'}(J;W^{-1,p'}).
\]
Moreover,
\begin{align}
 &\|y\|_{C(\overline J;L^2)}^2
 +\|y\|_{L^p(J;W^{1,p}_0)}^p
 +\|y'\|_{L^{p'}(J;W^{-1,p'})}^{p'}
 \notag\\
 \le{}&
 C_2\left(
 1+\|y_a\|_{L^2}^2
 +\|h\|_{L^{p'}(J;W^{-1,p'})}^{p'}
 \right),
 \label{eq:nondelayed-estimate}
\end{align}
where $C_2$ depends only on $|J|$, $p$, $\alpha_I$, and $C_I$.
\end{theorem}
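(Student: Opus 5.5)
The proof follows the standard monotone-operator theory for evolution equations in the Gelfand triple \(W^{1,p}_0\hookrightarrow L^2\hookrightarrow W^{-1,p'}\) (Lions; Showalter). We use the shifted variable \(\tilde y(t)=y(t+a)\), so it is enough to work on \(J=(0,|J|)\).

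Since \(p\ge2\) and \(\Omega\) is bounded, the embedding \(W^{1,p}_0\hookrightarrow L^2\) is continuous and dense, and the triple is well defined. We take \(\mathcal V:=L^p(J;W^{1,p}_0)\) and define the realization \(\mathcal A_\kappa\) on \(\mathcal V\) by \((\mathcal A_\kappa y)(t)=\mathcal A_\kappa(y(t))\).

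The pointwise properties of Assumption~\ref{ass:current} transfer to \(\mathcal V\) as follows.
\begin{itemize}
\item Measurability of \(t\mapsto\mathcal A_\kappa(y(t))\) holds because a hemicontinuous monotone operator is demicontinuous, and hence Borel on the separable space \(W^{1,p}_0\).
\item The growth bound \eqref{eq:Akappa-growth} shows that \(\mathcal A_\kappa\) maps \(\mathcal V\) boundedly into \(\mathcal V'=L^{p'}(J;W^{-1,p'})\).
\item Uniform \(p\)-monotonicity \eqref{eq:uniform-p-monotone} gives monotonicity on \(\mathcal V\).
\item Coercivity follows from \eqref{eq:Akappa-coercive}.
\end{itemize}
Under these conditions the abstract theorem for \(y'+\mathcal A y=h\), \(y(0)=y_a\) applies, for instance Showalter, Prop.~III.4.1 / Lions' theorem. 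It yields existence of \(y\in\mathcal V\) with \(y'\in\mathcal V'\). The Lions--Magenes lemma then gives \(y\in C(\overline J;L^2)\) together with the chain rule \(\frac{d}{dt}\|y\|_{L^2}^2=2\dual{y'}{y}\).

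An alternative constructive route, in case citing is deemed insufficient, is Faedo--Galerkin with a basis of \(W^{1,p}_0\). One would derive the energy bounds below for the approximations and pass to the limit using Minty's trick. The main technical step there is identifying the weak limit of \(\mathcal A_\kappa(y_m)\), which relies on monotonicity together with the limsup inequality obtained from the energy identity.

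Uniqueness comes from the chain rule. For two solutions, \(\frac12\frac{d}{dt}\|y_1-y_2\|_{L^2}^2+\dual{\mathcal A_\kappa(y_1)-\mathcal A_\kappa(y_2)}{y_1-y_2}=0\). Since the second term is nonnegative by \eqref{eq:uniform-p-monotone}, the difference vanishes.

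For the estimate \eqref{eq:nondelayed-estimate}, we test the equation with \(y\) and use \eqref{eq:Akappa-coercive} and Young's inequality:
\[
\tfrac12\|y(t)\|_{L^2}^2+\alpha_I\int_0^t\|y\|_{W^{1,p}_0}^p
\le\tfrac12\|y_a\|_{L^2}^2+C_I|J|+\tfrac{\alpha_I}{2}\int_0^t\|y\|_{W^{1,p}_0}^p+C\int_0^t\|h\|_{W^{-1,p'}}^{p'} .
\]
Absorbing the middle term on the right gives the bounds in \(C(\overline J;L^2)\) and \(L^p(J;W^{1,p}_0)\). For the time derivative we use \(y'=h-\mathcal A_\kappa(y)\) and \eqref{eq:Akappa-growth}:
\[
\|y'\|_{\mathcal V'}^{p'}\le C\bigl(\|h\|_{\mathcal V'}^{p'}+|J|+\|y\|_{\mathcal V}^p\bigr),
\]
using \((p-1)p'=p\). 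All constants depend only on \(|J|,p,\alpha_I,C_I\), and in particular are uniform in \(\kappa\in I\). This proves \eqref{eq:nondelayed-estimate}.
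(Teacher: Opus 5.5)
Your proposal is correct and follows essentially the same route as the paper. You invoke the standard monotone evolution theorem in the Gelfand triple $W^{1,p}_0\subset L^2\subset W^{-1,p'}$ after translating $J$; the paper cites Barbu, Theorem~4.10, after noting that monotone hemicontinuous operators are demicontinuous. You then obtain the bound by testing with $y$, using \eqref{eq:Akappa-coercive} and Young's inequality, and estimating $y'=h-\mathcal A_\kappa(y)$ via \eqref{eq:Akappa-growth}, exactly as the paper does. Your explicit uniqueness argument via monotonicity and the optional Galerkin/Minty route are harmless additions to what the paper takes directly from the cited theorem.
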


\begin{proof}
We first prove existence. Fix \(\kappa\in I\). By Assumption
\ref{ass:current}, the operator
\[
 \mathcal A_\kappa:W^{1,p}_0\to W^{-1,p'}
\]
is hemicontinuous. Moreover, \eqref{eq:uniform-p-monotone} implies
monotonicity, while \eqref{eq:Akappa-growth} and
\eqref{eq:Akappa-coercive} give, respectively, the \(p\)-growth and
coercivity conditions required in the standard monotone evolution theorem.
Since a monotone hemicontinuous operator is demicontinuous, see for instance
\cite[Chapter~II, Section~2]{Showalter1997MonotoneOperatorsBanachSpaceNonlinearPDE}, the demicontinuity hypothesis in
\cite[Theorem~4.10]{Barbu2010NonlinearDifferentialEquationsMonotoneTypesBanachSpaces} is satisfied.  Applying that theorem in the
setting
\[
 W^{1,p}_0\subset L^2\subset W^{-1,p'}
\]
after translating the interval \(J=(a,b)\) to \((0,b-a)\), we obtain a unique
solution of \eqref{eq:nondelayed-problem} such that
\[
y\in C(\overline J;L^2) \cap L^p(J;W^{1,p}_0) \cap W^{1,p'}(J;W^{-1,p'}).
\]

We now derive the estimates used later. Taking the duality pairing of
\eqref{eq:nondelayed-problem} with \(y(\tau)\) and integrating over
\((a,t)\), we obtain
\[
 \int_a^t\dual{y'(\tau)}{y(\tau)}\,d\tau
 +\int_a^t\dual{\mathcal A_\kappa(y(\tau))}{y(\tau)}\,d\tau
 =
 \int_a^t\dual{h(\tau)}{y(\tau)}\,d\tau .
\]
Using the energy identity,
\[
 \int_a^t\dual{y'(\tau)}{y(\tau)}\,d\tau
 =
 \frac12\int_a^t
 \frac{d}{d\tau}\|y(\tau)\|_{L^2}^2\,d\tau
 =
 \frac12\|y(t)\|_{L^2}^2
 -\frac12\|y_a\|_{L^2}^2 .
\]
By \eqref{eq:Akappa-coercive}, we get
\[
 \frac12\|y(t)\|_{L^2}^2
 +\alpha_I\int_a^t\|y(\tau)\|_{W^{1,p}_0}^p\,d\tau
 \le
 \frac12\|y_a\|_{L^2}^2
 +C_I|J|
 +\int_a^t\dual{h(\tau)}{y(\tau)}\,d\tau .
\]
Young's inequality gives
\[
 \dual{h(\tau)}{y(\tau)}
 \le
 \frac{\alpha_I}{2}\|y(\tau)\|_{W^{1,p}_0}^p
 +C\|h(\tau)\|_{W^{-1,p'}}^{p'} .
\]
Therefore,
\[
 \frac12\|y(t)\|_{L^2}^2
 +\frac{\alpha_I}{2}
 \int_a^t\|y(\tau)\|_{W^{1,p}_0}^p\,d\tau
 \le
 \frac12\|y_a\|_{L^2}^2
 +C_I|J|
 +C\int_a^t\|h(\tau)\|_{W^{-1,p'}}^{p'}\,d\tau .
\]
Taking the supremum over \(t\in\overline J\), we obtain
\[
 \|y\|_{C(\overline J;L^2)}^2
 +\|y\|_{L^p(J;W^{1,p}_0)}^p
 \le
 C \left(
 1+\|y_a\|_{L^2}^2
 +\|h\|_{L^{p'}(J;W^{-1,p'})}^{p'}
 \right).
\]
Moreover,
\[
 y'=h-\mathcal A_\kappa(y).
\]
The growth estimate \eqref{eq:Akappa-growth} then yields
\[
 \|y'\|_{L^{p'}(J;W^{-1,p'})}^{p'}
 \le
 C\left(
 1+\|y\|_{L^p(J;W^{1,p}_0)}^p
 +\|h\|_{L^{p'}(J;W^{-1,p'})}^{p'}
 \right).
\]
Together with the previous bound, this proves
\eqref{eq:nondelayed-estimate} and completes the proof.
\end{proof}

The following lemma gives the corresponding difference estimate and
stability property for the non-delayed problem.

\begin{lemma}\label{lem:nondelayed-stability}
Suppose that Assumptions \ref{ass:current} and
\ref{ass:delayed-flux} hold.  Let \(J=(a,b)\) be a bounded interval and
let \(\kappa\in I\).  Let \(y_1,y_2\) be the solutions of
\eqref{eq:nondelayed-problem} corresponding to the same \(\kappa\) and
to data
\[
 h_i\in L^{p'}(J;W^{-1,p'}),\qquad y_{a,i}\in L^2,
 \qquad i=1,2.
\]
Then
\begin{align}
 &\|y_1-y_2\|_{C(\overline J;L^2)}^2
 +\|y_1-y_2\|_{L^p(J;W^{1,p}_0)}^p
 \notag\\
 \le{}&
 C_3\left(
 \|y_{a,1}-y_{a,2}\|_{L^2}^2
 +\|h_1-h_2\|_{L^{p'}(J;W^{-1,p'})}^{p'}
 \right),
 \label{eq:nondelayed-difference}
\end{align}
where \(C_3\) depends only on \(p\) and \(\alpha_I\).

Moreover, the solution is stable with respect to \(\kappa\) and the data.
Let \(\kappa_n\to\kappa\) in \(I\),
\[
 h_n\to h
  \quad\text{in }L^{p'}(J;W^{-1,p'}),
 \qquad
 y_{a,n}\to y_a
 \quad\text{in }L^2.
\]
Let \(y_n\) and \(y\) be the corresponding solutions for
\((\kappa_n,h_n,y_{a,n})\) and \((\kappa,h,y_a)\).  Then
\[
 y_n\to y
 \quad\text{in }
 C(\overline J;L^2)\cap L^p(J;W^{1,p}_0).
\]
\end{lemma}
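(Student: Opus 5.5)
The proof has two parts: the difference estimate \eqref{eq:nondelayed-difference}, obtained by a direct energy argument, and the stability statement, obtained by splitting the error into a data part and a $\kappa$ part.

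\emph{Difference estimate.} Set $e:=y_1-y_2$. Since both solutions use the same $\kappa$, we have $e'+\mathcal A_\kappa(y_1)-\mathcal A_\kappa(y_2)=h_1-h_2$ in $L^{p'}(J;W^{-1,p'})$.

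1. Pair this identity with $e(\tau)$ and integrate over $(a,t)$.
2. Use the energy identity $\int_a^t\dual{e'}{e}\,d\tau=\frac12\|e(t)\|_{L^2}^2-\frac12\|e(a)\|_{L^2}^2$, which is valid because $e\in L^p(J;W^{1,p}_0)$ and $e'\in L^{p'}(J;W^{-1,p'})$.
3. Bound the operator term from below by \eqref{eq:uniform-p-monotone}, which gives $\alpha_I\int_a^t\|e\|^p_{W^{1,p}_0}$.
4. Estimate the right-hand side by Young's inequality: $\dual{h_1-h_2}{e}\le\frac{\alpha_I}{2}\|e\|^p_{W^{1,p}_0}+C(p,\alpha_I)\|h_1-h_2\|^{p'}_{W^{-1,p'}}$.
5. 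Absorb the first term on the left and take the supremum over $t\in\overline J$.

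This yields \eqref{eq:nondelayed-difference} with $C_3$ depending only on $p$ and $\alpha_I$. No Gronwall step is needed, because the $p$-monotonicity already gives the full $W^{1,p}_0$ control.

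\emph{Stability.} Let $\tilde y_n$ be the solution with parameter $\kappa$ and data $(h_n,y_{a,n})$. By the difference estimate, $\tilde y_n\to y$ in $C(\overline J;L^2)\cap L^p(J;W^{1,p}_0)$, so it remains to show $y_n-\tilde y_n\to0$. Write $\mathcal A_{\kappa_n}=\mathcal A_\kappa+(\kappa_n-\kappa)\mathcal C$, which follows from the definition $\mathcal A_\kappa=\mathcal A_{\rm raw}+\kappa\mathcal C$. Then $y_n$ solves the equation with parameter $\kappa$ and forcing $h_n-(\kappa_n-\kappa)\mathcal C(y_n)$. Applying \eqref{eq:nondelayed-difference} with the common parameter $\kappa$ gives
\[
 \|y_n-\tilde y_n\|_{C(\overline J;L^2)}^2+\|y_n-\tilde y_n\|_{L^p(J;W^{1,p}_0)}^p
 \le C_3|\kappa_n-\kappa|^{p'}\|\mathcal C(y_n)\|_{L^{p'}(J;W^{-1,p'})}^{p'}.
\]

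\emph{Uniform bound on $\mathcal C(y_n)$.} By \eqref{eq:C-growth}, $\|\mathcal C(y_n)\|^{p'}_{L^{p'}(J;W^{-1,p'})}\le C(1+\|y_n\|^p_{L^p(J;W^{1,p}_0)})$. Theorem~\ref{thm:nondelayed-problem}, whose constant $C_2$ is uniform for $\kappa_n\in I$, bounds $\|y_n\|_{L^p(J;W^{1,p}_0)}$ uniformly in $n$, since $h_n$ and $y_{a,n}$ are bounded. Because $|\kappa_n-\kappa|\to0$, the right-hand side above tends to zero, and the triangle inequality gives the claimed convergence.

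\emph{Main obstacle.} The only delicate point is passing from $\kappa_n$ to $\kappa$ in the operator. This is resolved by the affine dependence $\mathcal A_\kappa=\mathcal A_{\rm raw}+\kappa\mathcal C$ together with the uniformity of the constants over $I$. This is where Assumption~\ref{ass:delayed-flux} enters. If the affine structure were not to be used, one would instead need a separate continuity of $\kappa\mapsto\mathcal A_\kappa$ in $L^{p'}$ on bounded sets.
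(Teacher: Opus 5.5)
Your difference estimate is the same argument as in the paper. Your stability argument is correct but takes a different, slightly longer route.

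\textbf{The paper's route.} The paper freezes the limit solution $y$ and regards it as a solution of the $\kappa_n$-problem with forcing $\widetilde h_n:=h+(\kappa_n-\kappa)\mathcal C(y)$. A single application of \eqref{eq:nondelayed-difference} at parameter $\kappa_n$ then bounds the error by $\|y_{a,n}-y_a\|_{L^2}^2+\|h_n-h-(\kappa_n-\kappa)\mathcal C(y)\|_{L^{p'}(J;W^{-1,p'})}^{p'}$. Here $C_3$ is uniform in $n$ because $\alpha_I$ is uniform on $I$. The only further fact needed is that $\mathcal C(y)$ is one fixed element of $L^{p'}(J;W^{-1,p'})$.

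\textbf{Your route.} You move the perturbation onto $y_n$, so the difference estimate is only used at the fixed parameter $\kappa$. The price is that the error $(\kappa_n-\kappa)\mathcal C(y_n)$ now involves the moving solution. You therefore need Theorem~\ref{thm:nondelayed-problem} to get a uniform bound on $\|y_n\|_{L^p(J;W^{1,p}_0)}$, which relies on the coercivity constants being uniform over $I$. This works, since $h_n$ and $y_{a,n}$ are bounded. So both arguments use uniformity over $I$, just different parts of Assumption~\ref{ass:current}. The paper's choice avoids the a priori bound on $y_n$ altogether.

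\textbf{A simplification.} Your auxiliary solution $\tilde y_n$ is unnecessary. You can compare $y_n$ directly with $y$ in one application of \eqref{eq:nondelayed-difference}, both at parameter $\kappa$:
\begin{itemize}
\item $y_n$ with forcing $h_n-(\kappa_n-\kappa)\mathcal C(y_n)$ and initial value $y_{a,n}$;
\item $y$ with forcing $h$ and initial value $y_a$.
\end{itemize}
This is legitimate because \eqref{eq:nondelayed-difference} is a pure energy estimate. It holds for any two functions in the solution class satisfying the respective equations.
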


\begin{proof}
We first derive the difference estimate.  Set $ \zeta:=y_1-y_2 $.
Then
\[
 \zeta'
 +\mathcal A_\kappa(y_1)-\mathcal A_\kappa(y_2)
 =
 h_1-h_2 .
\]
Taking the duality pairing with \(\zeta(\tau)\) and integrating over
\((a,t)\), we obtain
\[
\begin{aligned}
 &\int_a^t\dual{\zeta'(\tau)}{\zeta(\tau)}\,d\tau
 +\int_a^t
 \dual{\mathcal A_\kappa(y_1(\tau))-\mathcal A_\kappa(y_2(\tau))}
       {\zeta(\tau)}
 \,d\tau \\
 &\qquad=
 \int_a^t\dual{h_1(\tau)-h_2(\tau)}{\zeta(\tau)}\,d\tau .
\end{aligned}
\]
Using the energy identity, we have
\[
 \int_a^t\dual{\zeta'(\tau)}{\zeta(\tau)}\,d\tau
 =
 \frac12\|\zeta(t)\|_{L^2}^2
 -\frac12\|y_{a,1}-y_{a,2}\|_{L^2}^2 .
\]
By the uniform \(p\)-monotonicity \eqref{eq:uniform-p-monotone},
\[
 \dual{\mathcal A_\kappa(y_1(\tau))-\mathcal A_\kappa(y_2(\tau))}
       {\zeta(\tau)}
 \ge
 \alpha_I\|\zeta(\tau)\|_{W^{1,p}_0}^p .
\]
Hence
\[
\begin{aligned}
 &\frac12\|\zeta(t)\|_{L^2}^2
 +\alpha_I\int_a^t\|\zeta(\tau)\|_{W^{1,p}_0}^p\,d\tau \\
\le{}&
 \frac12\|y_{a,1}-y_{a,2}\|_{L^2}^2 
 +\int_a^t\dual{h_1(\tau)-h_2(\tau)}{\zeta(\tau)}\,d\tau .
\end{aligned}
\]
Young's inequality gives
\[
 \dual{h_1(\tau)-h_2(\tau)}{\zeta(\tau)}
 \le
 \frac{\alpha_I}{2}\|\zeta(\tau)\|_{W^{1,p}_0}^p
 +C\|h_1(\tau)-h_2(\tau)\|_{W^{-1,p'}}^{p'} .
\]
Therefore,
\[
\begin{aligned}
 &\frac12\|\zeta(t)\|_{L^2}^2
 +\frac{\alpha_I}{2}
 \int_a^t\|\zeta(\tau)\|_{W^{1,p}_0}^p\,d\tau \\
 \le{} &
 \frac12\|y_{a,1}-y_{a,2}\|_{L^2}^2
 +C\int_a^t
 \|h_1(\tau)-h_2(\tau)\|_{W^{-1,p'}}^{p'}\,d\tau .
\end{aligned}
\]
Taking the supremum over \(t\in\overline J\), and taking \(t=b\) in the
integral term, we obtain \eqref{eq:nondelayed-difference}.

It remains to prove stability with respect to \(\kappa\). Let \(y_n\) and
\(y\) be the solutions corresponding to \((\kappa_n,h_n,y_{a,n})\) and
\((\kappa,h,y_a)\), respectively. Set \(\zeta_n:=y_n-y\) and define
\[
 \widetilde h_n
 :=
 h+\mathcal A_{\kappa_n}(y)-\mathcal A_\kappa(y).
\]
By the definition
\[
 \mathcal A_\kappa=\mathcal A_{\rm raw}+\kappa\mathcal C,
\]
we have
\[
 \mathcal A_{\kappa_n}(y)-\mathcal A_\kappa(y)
 =
 (\kappa_n-\kappa)\mathcal C(y).
\]
Since \(y\in L^p(J;W^{1,p}_0)\), the growth estimate
\eqref{eq:C-growth} gives
\[
 \mathcal C(y)\in L^{p'}(J;W^{-1,p'}).
\]
Thus \(\widetilde h_n\in L^{p'}(J;W^{-1,p'})\), and
\[
 y'
 +\mathcal A_{\kappa_n}(y)
 =
 \widetilde h_n,
 \qquad
 y(a)=y_a.
\]
Applying \eqref{eq:nondelayed-difference}, with \(\kappa\) replaced by
\(\kappa_n\), gives
\begin{align*}
 &\|\zeta_n\|_{C(\overline J;L^2)}^2
 +\|\zeta_n\|_{L^p(J;W^{1,p}_0)}^p\\
 \le{}&
 C_3 \left(
 \|y_{a,n}-y_a\|_{L^2}^2
 +\|h_n-\widetilde h_n\|_{L^{p'}(J;W^{-1,p'})}^{p'}
 \right)\\
 \le{}&
 C\Bigl(
 \|y_{a,n}-y_a\|_{L^2}^2
 +\|h_n-h\|_{L^{p'}(J;W^{-1,p'})}^{p'}\\
 &
 +\|\mathcal A_{\kappa_n}(y)-\mathcal A_\kappa(y)\|_{L^{p'}(J;W^{-1,p'})}^{p'}
 \Bigr).
\end{align*}
Moreover,
\[
 \mathcal A_{\kappa_n}(y)\to\mathcal A_\kappa(y)
 \quad\text{in }L^{p'}(J;W^{-1,p'})
\]
because \(\kappa_n\to\kappa\) and \(\mathcal C(y)\in L^{p'}(J;W^{-1,p'})\).
Together with
\[
 y_{a,n}\to y_a \quad\text{in }L^2,
 \qquad
 h_n\to h \quad\text{in }L^{p'}(J;W^{-1,p'}),
\]
\eqref{eq:nondelayed-difference} implies
\[
 y_n\to y
 \quad\text{in }
 C(\overline J;L^2)\cap L^p(J;W^{1,p}_0).
\]
This completes the proof.
\end{proof}

We next prove existence, uniqueness, and an a priori estimate for kernels
separated from the origin.

\begin{theorem}\label{thm:separated-wp}
Suppose that Assumptions \ref{ass:current} and \ref{ass:delayed-flux} hold.
Let \(\kappa\in I\) and let \(\rho\in\M([0,r])\) satisfy
\[
 \supp|\rho|\subset[\tau_*,r]
\]
for some \(0<\tau_*\le r\).  Then, for every
\[
 f\in L^{p'}(0,T;W^{-1,p'}),\qquad
 u_0\in L^2,\qquad
 \phi\in L^p(-r,0;W^{1,p}_0),
\]
the problem
\begin{equation}
 u'(t)+\mathcal A_\kappa(u(t))+(\mathcal B_\rho\bar u)(t)=f(t),
 \qquad 0<t<T,
 \label{eq:separated-problem}
\end{equation}
has a unique weak solution with initial datum \(u_0\) and past history
\(\phi\).  Moreover,
\begin{align}
 &\|u\|_{C([0,T];L^2)}^2
 +\|u\|_{L^p(0,T;W^{1,p}_0)}^p
 +\|u'\|_{L^{p'}(0,T;W^{-1,p'})}^{p'}
 \notag\\
 \le{}&
 C_4\left(
 1+\|u_0\|_{L^2}^2
 +\|\phi\|_{L^p(-r,0;W^{1,p}_0)}^p
 +\|f\|_{L^{p'}(0,T;W^{-1,p'})}^{p'}
 \right),
 \label{eq:separated-estimate}
\end{align}
where \(C_4\) depends only on \(T\), \(r\), \(\tau_*\), \(p\),
\(\alpha_I\), \(C_I\), \(C_{\mathcal C}\), and
\(\|\rho\|_{\M([0,r])}\).
\end{theorem}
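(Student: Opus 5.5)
We will use the method of steps. Fix $\delta:=\tau_*/2$ (any $0<\delta<\tau_*$ would do) and partition $[0,T]$ into $N=\lceil T/\delta\rceil$ consecutive intervals $J_k=(t_{k-1},t_k)$, with $t_k=\min(k\delta,T)$. For $t\in J_k$ and $s\in\supp|\rho|\subset[\tau_*,r]$ we have $t-s<t_k-\tau_*<t_{k-1}$. Hence the restriction of $\mathcal B_\rho\bar u$ to $J_k$ depends only on $\bar u$ on $(-r,t_{k-1})$, that is, on $\phi$ and on the solution already built on $J_1,\dots,J_{k-1}$.

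We argue inductively. Suppose $u$ has been constructed on $[0,t_{k-1}]$, with $u\in L^p(0,t_{k-1};W^{1,p}_0)\cap C([0,t_{k-1}];L^2)$ and $u'\in L^{p'}$. Set
\[
h_k:=f-\mathcal B_\rho\bar u\quad\text{on }J_k .
\]
This is well defined: extend $\bar u$ by zero beyond $t_{k-1}$ and apply \eqref{eq:Bnu-dual-def}; the extension does not matter because of the support property. By the growth estimate \eqref{eq:Bnu-growth}, applied on $J_k$ after translation, $h_k\in L^{p'}(J_k;W^{-1,p'})$ and
\[
\|h_k\|_{L^{p'}(J_k)}^{p'}\le C\Bigl(\|f\|^{p'}+\|\rho\|_{\M}^{p'}\bigl(1+\|\phi\|^{p}_{L^p(-r,0)}+\|u\|^{p}_{L^p(0,t_{k-1})}\bigr)\Bigr).
\]
Here we used $(p-1)p'=p$. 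Theorem~\ref{thm:nondelayed-problem} on $J_k$, with datum $u(t_{k-1})\in L^2$, gives a unique $y_k$. Define $u:=y_k$ on $J_k$.

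The pieces glue correctly. Continuity in $L^2$ holds at the nodes by construction. The weak derivative of the concatenation is the concatenation of the derivatives, since each piece is $W^{1,p'}$ in time with values in $W^{-1,p'}$ and the pieces match at the nodes. Therefore $u$ is a weak solution on $[0,T]$ in the sense of Definition~\ref{def:weak-solution}.

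For uniqueness, let $u,v$ be two weak solutions. On $J_1$ both solve the same non-delayed problem with the same $h_1$, which depends only on $\phi$, so they coincide by Theorem~\ref{thm:nondelayed-problem}, or equivalently by \eqref{eq:nondelayed-difference}. Inductively, if $u=v$ on $(0,t_{k-1})$, then $h_k$ is the same for both and $u(t_{k-1})=v(t_{k-1})$, so $u=v$ on $J_k$.

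The a priori estimate \eqref{eq:separated-estimate} comes from iterating \eqref{eq:nondelayed-estimate}. Write
\[
E_k:=\|u\|_{C([0,t_k];L^2)}^2+\|u\|^p_{L^p(0,t_k)}+\|u'\|^{p'}_{L^{p'}(0,t_k)} .
\]
Combining \eqref{eq:nondelayed-estimate} with the bound on $h_k$ gives
\[
E_k\le E_{k-1}+C_2\bigl(1+E_{k-1}+C(\|f\|^{p'}+\|\rho\|^{p'}(1+\|\phi\|^p+E_{k-1}))\bigr),
\]
so $E_k\le K(1+E_{k-1}+D)$, where $D$ collects the data norms. Iterating $N\le T/\delta+1$ times yields $E_N\le C_4(1+\|u_0\|^2+\|\phi\|^p+\|f\|^{p'})$. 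The constant $C_4$ grows geometrically in $N$, and hence depends on $\tau_*$ as stated. The constant $C_2$ depends only on $|J_k|\le\delta$, $p$, $\alpha_I$ and $C_I$; it is uniform in $k$ because the estimate is monotone in the interval length, or one can simply take the worst case over lengths $\le\delta$.

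The main delicate point is the measurability and well-definedness of $\mathcal B_\rho\bar u$ on $J_k$ when $u$ is known only up to $t_{k-1}$, together with the localized form of \eqref{eq:Bnu-growth} on a subinterval. We handle it by applying the definition to the zero-extended history, so that \eqref{eq:Bnu-growth} is applied on $(t_{k-1},t_k)$ by time translation. The support condition then guarantees that this choice does not affect the values on $J_k$. Everything else is routine bookkeeping.
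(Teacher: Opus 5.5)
Your proposal is correct and follows essentially the same method-of-steps argument as the paper: subintervals shorter than $\tau_*$, the delay term frozen via the zero-extended history, Theorem~\ref{thm:nondelayed-problem} applied on each piece, stepwise uniqueness, and iteration of \eqref{eq:nondelayed-estimate} to obtain \eqref{eq:separated-estimate}. The differences are only bookkeeping: you use steps of length $\tau_*/2$ with a possibly shorter last interval and keep $\|u'\|^{p'}$ inside $E_k$, whereas the paper uses equal steps $T/N<\tau_*$ and recovers the bound on $u'$ from the equation at the end.
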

\begin{proof}
Choose \(N\in\mathbb N\) such that \(T/N<\tau_*\), and set
\[
 t_j:=\frac{jT}{N},\qquad j=0,\dots,N.
\]
Then
\[
 0=t_0<t_1<\cdots<t_N=T,
 \qquad
 t_j-t_{j-1}<\tau_* .
\]
We construct the solution successively on the intervals \(I_j=(t_{j-1},t_j)\).

For the induction step, suppose that \(u\) has already been constructed on
the previous intervals \((0,t_{j-1})\), with the convention that this set is
empty when \(j=1\). Define \(w_j\in L^p(-r,T;W^{1,p}_0)\) by
\[
 w_j(t)=
 \begin{cases}
  \phi(t) & -r<t<0,\\
  u(t)   & 0\le t \le t_{j-1},\\
  0      & t_{j-1}<t<T.
 \end{cases}
\]
If \(t\in I_j\) and
\(s\in\supp|\rho|\), then \(s\ge\tau_*\), and hence
\[
 t-s<t_j-\tau_*<t_{j-1}.
\]
Thus the delayed term on \(I_j\) only involves already known values.  Set
\[
 F_j(t):=(\mathcal B_\rho w_j)(t),
 \qquad t\in I_j.
\]
By \eqref{eq:Bnu-growth}, applied with \(\nu=\rho\),
\[
 F_j\in L^{p'}(I_j;W^{-1,p'}).
\]
We now solve
\begin{equation}
 u'(t)+\mathcal A_\kappa(u(t))=f(t)-F_j(t),
 \qquad t\in I_j,
 \label{eq:step-problem}
\end{equation}
with initial value \(u_0\) if \(j=1\), and with initial value
\(u(t_{j-1})\) if \(j\ge2\). Theorem
\ref{thm:nondelayed-problem} gives a unique solution on \(I_j\), which we
append to the previously constructed one.
Repeating the construction for \(j=1,\dots,N\), we obtain
\[
 u\in L^p(0,T;W^{1,p}_0)\cap C([0,T];L^2),
 \qquad
 u'\in L^{p'}(0,T;W^{-1,p'}).
\]
By construction, \(F_j=(\mathcal B_\rho\bar u)|_{I_j}\) on each interval
\(I_j\), and hence \(u\) is a weak solution of
\eqref{eq:separated-problem}.

Uniqueness is proved in the same stepwise way. On \(I_1\), the delayed
term is determined entirely by the prescribed history \(\phi\). Thus two
solutions with the same data solve the same non-delayed problem on \(I_1\),
and they coincide there. If they coincide on \((-r,t_{j-1}]\), then their
separated delay terms coincide on \(I_j\). The uniqueness part of Theorem
\ref{thm:nondelayed-problem} implies that the two solutions coincide on
\(I_j\). Induction over \(j\) proves uniqueness on \([0,T]\).

It remains to prove the estimate.  Put
\[
 E_j:=
 \|u\|_{C([0,t_j];L^2)}^2
 +
 \|u\|_{L^p(0,t_j;W^{1,p}_0)}^p.
\]
By the definition of \(F_j\) and the growth estimate
\eqref{eq:Bnu-growth}, we have
\[
 \|F_j\|_{L^{p'}(I_j;W^{-1,p'})}^{p'}
 \le
 C\left(
 1+\|\phi\|_{L^p(-r,0;W^{1,p}_0)}^p
  +\|u\|_{L^p(0,t_{j-1};W^{1,p}_0)}^p
 \right),
\]
where \(C\) depends only on \(T\), \(r\), \(p\), \(C_{\mathcal C}\), and
\(\|\rho\|_{\M([0,r])}\). Applying
\eqref{eq:nondelayed-estimate} on \(I_j\) to
\eqref{eq:step-problem}, we obtain
\[
 E_j
 \le
 C E_{j-1}
 +C\left(
 1+\|\phi\|_{L^p(-r,0;W^{1,p}_0)}^p
  +\|f\|_{L^{p'}(I_j;W^{-1,p'})}^{p'}
 \right),
\]
with \(E_0=\|u_0\|_{L^2}^2\). Iterating this estimate for \(j=1,\dots,N\), we obtain
\[
\begin{aligned}
 &\|u\|_{C([0,T];L^2)}^2
 +\|u\|_{L^p(0,T;W^{1,p}_0)}^p \\
\le{}&
 C\left(
 1+\|u_0\|_{L^2}^2
 +\|\phi\|_{L^p(-r,0;W^{1,p}_0)}^p
 +\|f\|_{L^{p'}(0,T;W^{-1,p'})}^{p'}
 \right).
\end{aligned}
\]
Finally,
\[
 u'=f-\mathcal A_\kappa(u)-\mathcal B_\rho\bar u.
\]
Using \eqref{eq:Akappa-growth} and \eqref{eq:Bnu-growth}, we get
\[
\begin{aligned}
 &\|u'\|_{L^{p'}(0,T;W^{-1,p'})}^{p'} \\
 \le{}&
 C\left(
 1+\|u_0\|_{L^2}^2
 +\|\phi\|_{L^p(-r,0;W^{1,p}_0)}^p
 +\|f\|_{L^{p'}(0,T;W^{-1,p'})}^{p'}
 \right).
\end{aligned}
\]
This proves \eqref{eq:separated-estimate} and completes the proof.
\end{proof}

\section{Extension and stability}\label{sec:extension}

In this section we remove the separation assumption on the residual kernel.
We first obtain uniform estimates for the separated kernels \(\nu_\e\)
defined in \eqref{eq:nu-eps-def}. For the convergence step we use the total kernel
\[
 \mu=\kappa\delta_0+\nu .
\]
The equation with \(\nu_\e\) is rewritten as the full-kernel equation plus an
error term which tends to zero in \(L^{p'}(0,T;W^{-1,p'})\). The comparison
estimate is then obtained from Assumption~\ref{ass:pathwise-coercivity}.

For \(\eta\in\M([0,r])\) and \(0<\delta\le r\), we write
\[
 m_\eta(\delta):=|\eta|((0,\delta]).
\]
Since \(|\eta|\) is a finite measure, we have
\[
 m_\eta(\delta)\to0
 \quad\text{as }\delta\to0 .
\]

We first derive an a priori estimate for the solutions corresponding to the
separated approximating kernels \(\nu_\e\), uniformly in \(\e\).

\begin{lemma}\label{lem:uniform-truncation-estimate}
Suppose that Assumptions \ref{ass:current} and \ref{ass:delayed-flux} hold.
Let \(\kappa\in I\), and let \(\nu\in\M([0,r])\) satisfy $|\nu|(\{0\})=0$.
Let
\[
 f\in L^{p'}(0,T;W^{-1,p'}),\qquad
 u_0\in L^2,\qquad
 \phi\in L^p(-r,0;W^{1,p}_0).
\]
For \(0<\e\le r\), set $ \nu_\e:=\nu|_{[\e,r]}$,
and let \(u_\e\) be the weak solution with kernel
\(\nu_\e\) and data \((u_0,\phi,f)\).  Then there exists a constant
\(C_5>0\) such that
\begin{align}
 &\|u_\e\|_{C([0,T];L^2)}^2
 +\|u_\e\|_{L^p(0,T;W^{1,p}_0)}^p
 +\|u_\e'\|_{L^{p'}(0,T;W^{-1,p'})}^{p'}
 \notag\\
 \le{}&
 C_5\left(
 1+\|u_0\|_{L^2}^2
 +\|\phi\|_{L^p(-r,0;W^{1,p}_0)}^p
 +\|f\|_{L^{p'}(0,T;W^{-1,p'})}^{p'}
 \right).
 \label{eq:uniform-truncation-estimate}
\end{align}
Here \(C_5\) may depend on \(T\), \(r\), \(p\), \(\alpha_I\), \(C_I\),
\(C_{\mathcal C}\), and \(\nu\), but not on \(\e\).
\end{lemma}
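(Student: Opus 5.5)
The constant in Theorem~\ref{thm:separated-wp} depends on \(\tau_*=\e\), so that estimate cannot be quoted directly. Instead I will redo the energy estimate with a splitting of the kernel whose parameters depend only on \(\nu\). No pathwise coercivity is available here, so the delayed term must be handled by smallness near the origin plus a step argument away from it.

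\textbf{Choice of the splitting.} Fix \(\delta\in(0,r]\), depending only on \(\nu\), \(p\), \(\alpha_I\) and \(C_{\mathcal C}\), so small that \(C\,C_{\mathcal C}\,m_\nu(\delta)\le\alpha_I/4\), where \(C\) is the constant produced below. For \(\e<\delta\) I write
\[
 \nu_\e=\nu|_{[\e,\delta)}+\nu|_{[\delta,r]}=:\nu_\e^{\rm near}+\nu^{\rm far}.
\]
If \(\e\ge\delta\), then \(\nu_\e\) is separated from zero by \(\delta\), and only the far part occurs. In both cases \(\|\nu_\e^{\rm near}\|_{\M([0,r])}\le m_\nu(\delta)\), and \(\nu^{\rm far}\) (or \(\nu_\e\)) is supported in \([\delta,r]\) with total variation at most \(\|\nu\|_{\M([0,r])}\).

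\textbf{Energy estimate on steps of length \(<\delta\).} Partition \([0,T]\) into \(N\) intervals \(I_j=(t_{j-1},t_j)\) with \(T/N<\delta\); note that \(N\) is independent of \(\e\). Pair the equation for \(u_\e\) with \(u_\e\) and integrate over \((0,t)\) with \(t\in I_j\). Use the energy identity and the coercivity \eqref{eq:Akappa-coercive}, and treat the two delay parts as follows.

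\emph{Far part.} For \(\tau\le t\le t_j\) and \(s\ge\delta\), we have \(\tau-s<t_{j-1}\). The growth estimate \eqref{eq:Bnu-growth} then gives
\[
 \|\mathcal B_{\nu^{\rm far}}\bar u_\e\|_{L^{p'}(0,t;W^{-1,p'})}^{p'}\le C\bigl(1+\|\phi\|^p_{L^p(-r,0;W^{1,p}_0)}+\|u_\e\|^p_{L^p(0,t_{j-1};W^{1,p}_0)}\bigr).
\]
Young's inequality bounds its pairing with \(u_\e\) by \(\tfrac{\alpha_I}{4}\|u_\e\|^p_{L^p(0,t;W^{1,p}_0)}\) plus this quantity times a constant.

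\emph{Near part.} This term samples \(\bar u_\e\) inside the current interval, but by \eqref{eq:C-growth} and Minkowski's inequality in \(s\),
\[
 \Bigl|\int_0^t\dual{\mathcal B_{\nu_\e^{\rm near}}\bar u_\e}{u_\e}\,d\tau\Bigr|
 \le C\,C_{\mathcal C}\,m_\nu(\delta)\bigl(1+\|\phi\|^p_{L^p(-r,0;W^{1,p}_0)}+\|u_\e\|^p_{L^p(0,t;W^{1,p}_0)}\bigr).
\]
Here \(\|\bar u_\e\|^{p-1}\|u_\e\|\) is bounded by \(p\)-th powers via Young's inequality. By the choice of \(\delta\), the term \(\|u_\e\|^p_{L^p(0,t;W^{1,p}_0)}\) is absorbed into \(\tfrac{\alpha_I}{4}\|u_\e\|^p_{L^p(0,t;W^{1,p}_0)}\).

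The \(f\)-term is handled by Young's inequality as in Theorem~\ref{thm:nondelayed-problem}. After taking the supremum over \(t\le t_j\), this yields, for \(E_j:=\|u_\e\|^2_{C([0,t_j];L^2)}+\|u_\e\|^p_{L^p(0,t_j;W^{1,p}_0)}\),
\[
 E_j\le C E_{j-1}+C\bigl(1+\|u_0\|_{L^2}^2+\|\phi\|^p_{L^p(-r,0;W^{1,p}_0)}+\|f\|^{p'}_{L^{p'}(0,T;W^{-1,p'})}\bigr),
\]
with \(C\) independent of \(\e\). Iterating this \(N\) times bounds \(E_N\).

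\textbf{Derivative and main difficulty.} The derivative bound follows from \(u_\e'=f-\mathcal A_\kappa(u_\e)-\mathcal B_{\nu_\e}\bar u_\e\), using \eqref{eq:Akappa-growth} and \eqref{eq:Bnu-growth} with \(\|\nu_\e\|\le\|\nu\|\). The main obstacle is the near-origin term. It has the same homogeneity \(p\) as the coercive term and cannot be controlled stepwise, so it must be absorbed globally on \((0,t)\) through the smallness of \(m_\nu(\delta)\). The key structural point is that \(\delta\), and hence \(N\), is fixed by \(\nu\) alone rather than by \(\e\).
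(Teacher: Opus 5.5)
Your proposal is correct and follows essentially the paper's proof. As there, $\delta$ is fixed by $\nu$ alone (so $N$ is independent of $\e$), $\nu_\e$ is split at $\delta$, the far part is controlled on steps of length $<\delta$ through $E_{j-1}$, the near part is absorbed using the smallness of $m_\nu(\delta)$, and the derivative bound is read off from the equation. The only difference is bookkeeping. The paper applies \eqref{eq:nondelayed-estimate} on each $I_j$ with both delay terms as forcing, so the near part enters as $K_1 m_\nu(\delta)^{p'}E_{\e,j}$; you instead redo the energy estimate on $(0,t)$ and absorb a term linear in $m_\nu(\delta)$. One small fix: give the $f$-term a Young fraction smaller than the $\alpha_I/2$ used in Theorem~\ref{thm:nondelayed-problem}. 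With your two $\alpha_I/4$ allotments, $\alpha_I/2$ would use up all of $\alpha_I$ and leave nothing on the left.
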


\begin{proof}
The existence of \(u_\e\) follows from Theorem
\ref{thm:separated-wp}, because \(\nu_\e\) is separated from zero.
We prove the estimate.

Let \(0<\delta\le r\) be a parameter to be chosen sufficiently small below.
For this \(\delta\), choose \(N\in\mathbb N\) such that \(T/N<\delta\), and set
\[
 t_j:=\frac{jT}{N},\qquad j=0,\dots,N.
\]
Then
\[
 0=t_0<t_1<\cdots<t_N=T,
 \qquad
 t_j-t_{j-1}<\delta .
\]
We write \(I_j=(t_{j-1},t_j)\).

For fixed \(\e\), decompose
\[
 \nu_\e
 =
 \rho_{\e,\delta}
 +
 \mu_{\e,\delta},
 \qquad
 \rho_{\e,\delta}:=\nu_\e|_{[\delta,r]},
 \qquad
 \mu_{\e,\delta}:=\nu_\e|_{(0,\delta)} .
\]
The measure \(\rho_{\e,\delta}\) is separated from zero by \(\delta\), whereas
\[
 \|\mu_{\e,\delta}\|_{\M([0,r])}
 \le m_\nu(\delta).
\]

Put
\[
 E_{\e,j}:=
 \|u_\e\|_{C([0,t_j];L^2)}^2
 +
 \|u_\e\|_{L^p(0,t_j;W^{1,p}_0)}^p .
\]
Then \(E_{\e,0}=\|u_0\|_{L^2}^2\).  On \(I_j\), the equation for
\(u_\e\) can be written as
\[
 u_\e'(t)+\mathcal A_\kappa(u_\e(t))
 =
 f(t)
 -(\mathcal B_{\rho_{\e,\delta}}\bar u_\e)(t)
 -(\mathcal B_{\mu_{\e,\delta}}\bar u_\e)(t).
\]
If \(t\in I_j\) and \(s\in\supp|\rho_{\e,\delta}|\), then
\(s\ge\delta\), and hence
\[
 t-s<t_j-\delta<t_{j-1}.
\]
Thus the term \(\mathcal B_{\rho_{\e,\delta}}\bar u_\e\) on \(I_j\) only
involves the history and the solution on \((0,t_{j-1})\). By the same
argument as in \eqref{eq:Bnu-growth}, using the fact that
\(\rho_{\e,\delta}\) is supported in \([\delta,r]\), we have
\[
 \|\mathcal B_{\rho_{\e,\delta}}\bar u_\e
 \|_{L^{p'}(I_j;W^{-1,p'})}^{p'}
 \le
 C\left(
 1+\|\phi\|_{L^p(-r,0;W^{1,p}_0)}^p
 +E_{\e,j-1}
 \right).
\]
For the near-origin part, again by the same argument as in
\eqref{eq:Bnu-growth}, we obtain
\[
 \|\mathcal B_{\mu_{\e,\delta}}\bar u_\e
 \|_{L^{p'}(I_j;W^{-1,p'})}^{p'}
 \le
 C m_\nu(\delta)^{p'}
 \left(
 1+\|\phi\|_{L^p(-r,0;W^{1,p}_0)}^p
 +E_{\e,j}
 \right).
\]
In what follows, \(C\) is independent of \(\e\) and \(j\), and the constant
in \eqref{eq:nondelayed-estimate} is taken uniformly for \(|I_j|\le T\).

Applying \eqref{eq:nondelayed-estimate} on \(I_j\), and using the preceding
two estimates, we get
\[
\begin{aligned}
 E_{\e,j}
 \le{}&
 C E_{\e,j-1}
 +C\left(
 1+\|\phi\|_{L^p(-r,0;W^{1,p}_0)}^p
 +\|f\|_{L^{p'}(I_j;W^{-1,p'})}^{p'}
 \right) \\
 &+
 K_1 m_\nu(\delta)^{p'} E_{\e,j},
\end{aligned}
\]
where \(K_1>0\) is independent of \(\e\), \(j\), and \(\delta\). Since
\(m_\nu(\delta)\to0\) as \(\delta\to0\), we now choose the parameter
\(\delta\in(0,r]\) so small that
\[
 K_1m_\nu(\delta)^{p'}\le \frac12 .
\]
For this fixed \(\delta\), the last term can be absorbed into the left-hand
side. Hence
\[
 E_{\e,j}
 \le
 C E_{\e,j-1}
 +C\left(
 1+\|\phi\|_{L^p(-r,0;W^{1,p}_0)}^p
 +\|f\|_{L^{p'}(I_j;W^{-1,p'})}^{p'}
 \right).
\]
Iterating this estimate for \(j=1,\dots,N\), we obtain
\[
\begin{aligned}
 &\|u_\e\|_{C([0,T];L^2)}^2
 +\|u_\e\|_{L^p(0,T;W^{1,p}_0)}^p \\
 \le{}&
 C\left(
 1+\|u_0\|_{L^2}^2
 +\|\phi\|_{L^p(-r,0;W^{1,p}_0)}^p
 +\|f\|_{L^{p'}(0,T;W^{-1,p'})}^{p'}
 \right),
\end{aligned}
\]
where \(C\) is independent of \(\e\).

It remains to estimate the time derivative.  From the equation,
\[
 u_\e'
 =
 f-\mathcal A_\kappa(u_\e)
  -\mathcal B_{\nu_\e}\bar u_\e .
\]
Using \eqref{eq:Akappa-growth} and \eqref{eq:Bnu-growth}, together with
\(\|\nu_\e\|_{\M([0,r])}\le\|\nu\|_{\M([0,r])}\), we get
\[
\begin{aligned}
 \|u_\e'\|_{L^{p'}(0,T;W^{-1,p'})}^{p'}
 \le{}&
 C\|f\|_{L^{p'}(0,T;W^{-1,p'})}^{p'}
 +C\left(
  1+\|u_\e\|_{L^p(0,T;W^{1,p}_0)}^p
 \right) \\
 &+
 C\left(
  1+\|\bar u_\e\|_{L^p(-r,T;W^{1,p}_0)}^p
 \right).
\end{aligned}
\]
Since
\[
 \|\bar u_\e\|_{L^p(-r,T;W^{1,p}_0)}^p
 =
 \|\phi\|_{L^p(-r,0;W^{1,p}_0)}^p
 +
 \|u_\e\|_{L^p(0,T;W^{1,p}_0)}^p,
\]
the previous bound for \(u_\e\) yields
\[
\begin{aligned}
 &\|u_\e'\|_{L^{p'}(0,T;W^{-1,p'})}^{p'}\\
 \le{}&
 C\left(
 1+\|u_0\|_{L^2}^2
 +\|\phi\|_{L^p(-r,0;W^{1,p}_0)}^p  
 +\|f\|_{L^{p'}(0,T;W^{-1,p'})}^{p'}
 \right).
\end{aligned}
\]
The last two estimates, with constants independent of \(\e\), give
\eqref{eq:uniform-truncation-estimate} for some \(C_5\).  This completes the
proof.
\end{proof}

We next show that the error produced by removing the near-origin part of the
residual measure vanishes in the natural dual norm.

\begin{lemma}\label{lem:small-kernel-error}
Suppose that Assumptions \ref{ass:current} and \ref{ass:delayed-flux} hold.
Let \(\kappa\in I\), and let \(\nu\in\M([0,r])\) satisfy
\(|\nu|(\{0\})=0\). Let
\[
 f\in L^{p'}(0,T;W^{-1,p'}),\qquad
 u_0\in L^2,\qquad
 \phi\in L^p(-r,0;W^{1,p}_0).
\]
For \(0<\e\le r\), let \(\nu_\e\) be defined by
\eqref{eq:nu-eps-def}, and let \(u_\e\) be the weak solution with kernel
\(\nu_\e\) and data \((u_0,\phi,f)\). Set
\[
 \mu:=\kappa\delta_0+\nu,
 \qquad
 E_\e:=\mathcal B_{\nu-\nu_\e}\bar u_\e .
\]
Then
\begin{equation}
 u_\e'
 +\mathcal F_\mu\bar u_\e
 =
 f+E_\e
 \quad\text{in }L^{p'}(0,T;W^{-1,p'}),
 \label{eq:full-kernel-with-small-error}
\end{equation}
and
\begin{equation}
 E_\e\to0
 \quad\text{in }L^{p'}(0,T;W^{-1,p'})
 \qquad\text{as }\e\to0 .
 \label{eq:small-kernel-error-to-zero}
\end{equation}
\end{lemma}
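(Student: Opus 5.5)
The proof has two parts: an algebraic rewriting of the truncated equation, and a norm estimate for the discarded near-origin piece.

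\emph{Step 1: the identity \eqref{eq:full-kernel-with-small-error}.} By Lemma~\ref{lem:uniform-truncation-estimate} (via Theorem~\ref{thm:separated-wp}), $u_\e$ is a weak solution of
\[
 u_\e'+\mathcal A_\kappa(u_\e)+\mathcal B_{\nu_\e}\bar u_\e=f .
\]
The map $\eta\mapsto\mathcal B_\eta w$ is linear in the measure, by \eqref{eq:Bnu-def}. Hence
\[
 \mathcal B_{\nu_\e}\bar u_\e=\mathcal B_\nu\bar u_\e-\mathcal B_{\nu-\nu_\e}\bar u_\e .
\]
The kernel $\mu=\kappa\delta_0+\nu$ has $\kappa_\mu=\kappa$ and $\nu_\mu=\nu$, since $|\nu|(\{0\})=0$. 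Therefore
\[
 \mathcal F_\mu\bar u_\e=\mathcal A_\kappa(u_\e)+\mathcal B_\nu\bar u_\e ,
\]
and substituting gives \eqref{eq:full-kernel-with-small-error}. I would check that $\bar u_\e\in L^p(-r,T;W^{1,p}_0)$ and $u_\e\in\mathcal X_T$, so that $\mathcal F_\mu\bar u_\e$ is defined as in Assumption~\ref{ass:pathwise-coercivity}. All terms lie in $L^{p'}(0,T;W^{-1,p'})$ by \eqref{eq:Akappa-growth} and \eqref{eq:Bnu-growth}.

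\emph{Step 2: the convergence \eqref{eq:small-kernel-error-to-zero}.} The measure $\nu-\nu_\e=\nu|_{(0,\e)}$ has no atom at $0$ and total variation at most $m_\nu(\e)$. Applying the growth bound \eqref{eq:Bnu-growth} with this kernel gives
\[
 \|E_\e\|_{L^{p'}(0,T;W^{-1,p'})}
 \le C\,m_\nu(\e)\Bigl(1+\|\bar u_\e\|_{L^p(-r,T;W^{1,p}_0)}^{p-1}\Bigr),
\]
where $C$ depends only on $T,r,p,C_{\mathcal C}$. The $L^p$ norm of $\bar u_\e$ splits exactly as
\[
 \|\bar u_\e\|^p_{L^p(-r,T;W^{1,p}_0)}=\|\phi\|^p_{L^p(-r,0;W^{1,p}_0)}+\|u_\e\|^p_{L^p(0,T;W^{1,p}_0)} ,
\]
and Lemma~\ref{lem:uniform-truncation-estimate} bounds the second term uniformly in $\e$. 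So the bracket is bounded independently of $\e$. Since $m_\nu(\e)\to0$ by countable additivity of $|\nu|$ and $|\nu|(\{0\})=0$, it follows that $E_\e\to0$.

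\emph{Expected obstacle.} There is no serious obstacle. The only point needing care is that the bound \eqref{eq:Bnu-growth} must be linear in the total variation of the kernel. It is: as in the proof of Lemma~\ref{lem:Bnu-lip}, one uses Minkowski's integral inequality,
\[
 \|\mathcal B_\eta w\|_{L^{p'}(0,T;W^{-1,p'})}\le\int_{(0,r]}\|\mathcal C(w(\cdot-s))\|_{L^{p'}(0,T;W^{-1,p'})}\,d|\eta|(s),
\]
bounds the integrand via \eqref{eq:C-growth} together with the shift bound $\|w(\cdot-s)\|_{L^p(0,T)}\le\|w\|_{L^p(-r,T)}$, and then integrates against $|\eta|$. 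The result is linear in $|\eta|((0,r])$.
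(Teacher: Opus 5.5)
Your proposal is correct and follows essentially the same route as the paper. The identity \eqref{eq:full-kernel-with-small-error} comes from linearity of $\eta\mapsto\mathcal B_\eta w$ together with $\mathcal F_\mu\bar u_\e=\mathcal A_\kappa(u_\e)+\mathcal B_\nu\bar u_\e$, and the convergence comes from applying \eqref{eq:Bnu-growth} to $\nu-\nu_\e$, whose total variation is at most $m_\nu(\e)$, combined with the $\e$-uniform bound of Lemma~\ref{lem:uniform-truncation-estimate}.
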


\begin{proof}
By \eqref{eq:nu-eps-def}, \(\nu-\nu_\e=\nu|_{[0,\e)}\)
as measures on \([0,r]\). Therefore, since \(|\nu|(\{0\})=0\),
\[
 \|\nu-\nu_\e\|_{\M([0,r])}
 =
 |\nu|([0,\e))
 =
 |\nu|((0,\e))
 \le
 m_\nu(\e)
 \to0
 \qquad\text{as }\e\to0 .
\]

The equation solved by \(u_\e\) is
\[
 u_\e'
 +\mathcal A_\kappa(u_\e)
 +\mathcal B_{\nu_\e}\bar u_\e
 =
 f
 \quad\text{in }L^{p'}(0,T;W^{-1,p'}).
\]
Since \(\mu=\kappa\delta_0+\nu\), the definition of
\(\mathcal F_\mu\) gives
\[
 \mathcal F_\mu\bar u_\e
 =
 \mathcal A_\kappa(u_\e)
 +
 \mathcal B_\nu\bar u_\e .
\]
Consequently,
\[
\begin{aligned}
 u_\e'
 +\mathcal F_\mu\bar u_\e
 &=
 u_\e'
 +\mathcal A_\kappa(u_\e)
 +\mathcal B_\nu\bar u_\e  \\
 &=
 f
 -\mathcal B_{\nu_\e}\bar u_\e
 +\mathcal B_\nu\bar u_\e  \\
 &=
 f
 +\mathcal B_{\nu-\nu_\e}\bar u_\e
 =
 f+E_\e .
\end{aligned}
\]
This proves \eqref{eq:full-kernel-with-small-error}.

It remains to prove \eqref{eq:small-kernel-error-to-zero}. By
\eqref{eq:Bnu-growth}, applied to the measure \(\nu-\nu_\e\), we have
\[
 \|E_\e\|_{L^{p'}(0,T;W^{-1,p'})}
 \le
 C\|\nu-\nu_\e\|_{\M([0,r])}
 \left(
  1+
  \|\bar u_\e\|_{L^p(-r,T;W^{1,p}_0)}^{p-1}
 \right).
\]
Moreover,
\[
 \|\bar u_\e\|_{L^p(-r,T;W^{1,p}_0)}^p
 =
 \|\phi\|_{L^p(-r,0;W^{1,p}_0)}^p
 +
 \|u_\e\|_{L^p(0,T;W^{1,p}_0)}^p .
\]
The uniform estimate \eqref{eq:uniform-truncation-estimate} implies that
\[
 \sup_{0<\e\le r}
 \|\bar u_\e\|_{L^p(-r,T;W^{1,p}_0)}
 <\infty .
\]
Hence
\[
 \|E_\e\|_{L^{p'}(0,T;W^{-1,p'})}
 \le
 C m_\nu(\e)
 \to0
 \qquad\text{as }\e\to0 .
\]
This proves \eqref{eq:small-kernel-error-to-zero}. 
\end{proof}

The next estimate compares two separated problems after both have been
written with the same total kernel. This is the step where the coercivity of
\(\mathcal F_\mu\) is used.

\begin{lemma}\label{lem:separated-approx-cauchy}
Suppose that the hypotheses of Theorem~\ref{thm:extension-no-atom} hold.
For \(0<\e\le r\), set \(\nu_\e:=\nu|_{[\e,r]}\), and let \(u_\e\) be the
weak solution with kernel \(\nu_\e\). Let \(E_\e\) be the error term defined
in Lemma~\ref{lem:small-kernel-error}. Then there exists a constant \(C_6>0\)
such that, for all \(0<\e,\sigma\le r\),
\begin{equation}
 \|u_\e-u_\sigma\|_{C([0,T];L^2)}^2
 +
 \|u_\e-u_\sigma\|_{L^p(0,T;W^{1,p}_0)}^p
 \le
 C_6
 \|E_\e-E_\sigma\|_{L^{p'}(0,T;W^{-1,p'})}^{p'} .
 \label{eq:separated-approx-cauchy}
\end{equation}
Here \(C_6\) depends only on \(p\) and \(\gamma\). In particular,
\(\{u_\e\}_{0<\e\le r}\) is a Cauchy family in \(\mathcal X_T\).
\end{lemma}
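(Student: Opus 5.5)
Set \(\zeta:=u_\e-u_\sigma\) and compare the two full-kernel equations from Lemma~\ref{lem:small-kernel-error}. Subtracting \eqref{eq:full-kernel-with-small-error} for \(\sigma\) from the one for \(\e\) gives
\[
 \zeta'+\mathcal F_\mu\bar u_\e-\mathcal F_\mu\bar u_\sigma
 =E_\e-E_\sigma
 \quad\text{in }L^{p'}(0,T;W^{-1,p'}).
\]
The functions \(w:=\bar u_\e\) and \(z:=\bar u_\sigma\) lie in \(L^p(-r,T;W^{1,p}_0)\), their restrictions to \([0,T]\) belong to \(\mathcal X_T\), and both equal \(\phi\) a.e. on \((-r,0)\). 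Hence they are admissible in Assumption~\ref{ass:pathwise-coercivity} with \(\KM=\{\mu\}\). Also \(\zeta(0)=u_0-u_0=0\).

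Fix \(t\in(0,T]\), pair the difference equation with \(\zeta(\tau)\), and integrate over \((0,t)\). The energy identity (valid since \(\zeta\in L^p(0,T;W^{1,p}_0)\) and \(\zeta'\in L^{p'}(0,T;W^{-1,p'})\)) turns the first term into \(\tfrac12\|\zeta(t)\|_{L^2}^2\). Since \(w-z=\zeta\) on \((0,t)\), the pathwise coercivity \eqref{eq:pathwise-coercivity} bounds the operator term from below by \(\gamma\int_0^t\|\zeta\|_{W^{1,p}_0}^p\,d\tau\). For the right-hand side, Young's inequality gives
\[
 \dual{E_\e-E_\sigma}{\zeta}\le\tfrac{\gamma}{2}\|\zeta\|_{W^{1,p}_0}^p+C(p,\gamma)\|E_\e-E_\sigma\|_{W^{-1,p'}}^{p'}.
\]
This yields
\[
 \tfrac12\|\zeta(t)\|_{L^2}^2+\tfrac\gamma2\int_0^t\|\zeta\|_{W^{1,p}_0}^p\,d\tau
 \le C(p,\gamma)\|E_\e-E_\sigma\|_{L^{p'}(0,T;W^{-1,p'})}^{p'}
\]
for every \(t\). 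Taking the supremum over \(t\) for the \(L^2\) part and \(t=T\) for the integral gives \eqref{eq:separated-approx-cauchy} with \(C_6\) depending only on \(p\) and \(\gamma\).

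For the Cauchy property, Lemma~\ref{lem:small-kernel-error} gives \(E_\e\to0\) in \(L^{p'}(0,T;W^{-1,p'})\). Hence \(\|E_\e-E_\sigma\|\le\|E_\e\|+\|E_\sigma\|\to0\) as \(\e,\sigma\to0\), and the estimate makes \(\{u_\e\}\) Cauchy in \(\mathcal X_T\).

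The only delicate point is checking that the coercivity assumption applies. It must be used on each interval \((0,t)\), and this is exactly what the assumption provides since it holds for every \(0<t\le T\). The pairing \(\dual{(\mathcal F_\mu w)(\tau)-(\mathcal F_\mu z)(\tau)}{\zeta(\tau)}\) is integrable, because \(\mathcal F_\mu w\in L^{p'}\) by \eqref{eq:Akappa-growth} and \eqref{eq:Bnu-growth}. Beyond this, the argument is the standard monotone difference estimate, with \(\mathcal F_\mu\) replacing \(\mathcal A_\kappa\).
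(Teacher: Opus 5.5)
Your proposal is correct and follows the paper's proof essentially step for step. You subtract the two full-kernel equations from Lemma~\ref{lem:small-kernel-error}, test with \(u_\e-u_\sigma\), apply the pathwise coercivity \eqref{eq:pathwise-coercivity} on \((0,t)\), absorb the error term via Young's inequality, and take the supremum in \(t\); your explicit checks that \(\bar u_\e,\bar u_\sigma\) are admissible in Assumption~\ref{ass:pathwise-coercivity} and that \(\zeta(0)=0\) are details the paper leaves implicit.
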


\begin{proof}
Set \( z_{\e,\sigma}:=u_\e-u_\sigma\).
By Lemma~\ref{lem:small-kernel-error},
\[
 u_\e'+\mathcal F_\mu\bar u_\e=f+E_\e,
 \qquad
 u_\sigma'+\mathcal F_\mu\bar u_\sigma=f+E_\sigma .
\]
Subtracting the two identities gives
\[
 z_{\e,\sigma}'
 +
 \mathcal F_\mu\bar u_\e
 -
 \mathcal F_\mu\bar u_\sigma
 =
 E_\e-E_\sigma .
\]
Since \(\bar u_\e\) and \(\bar u_\sigma\) have the same past history
\(\phi\), Assumption~\ref{ass:pathwise-coercivity} yields, after testing by
\(z_{\e,\sigma}\) and integrating over \((0,t)\),
\[
 \frac12\|z_{\e,\sigma}(t)\|_{L^2}^2
 +
 \gamma\int_0^t
 \|z_{\e,\sigma}(\tau)\|_{W^{1,p}_0}^p\,d\tau
 \le
 \int_0^t
 \dual{E_\e(\tau)-E_\sigma(\tau)}
      {z_{\e,\sigma}(\tau)}
 \,d\tau .
\]
Young's inequality gives
\[
 \int_0^t
 \dual{E_\e-E_\sigma}{z_{\e,\sigma}}\,d\tau
 \le
 \frac{\gamma}{2}
 \int_0^t
 \|z_{\e,\sigma}(\tau)\|_{W^{1,p}_0}^p\,d\tau
 +
 C\|E_\e-E_\sigma\|_{L^{p'}(0,T;W^{-1,p'})}^{p'} .
\]
Absorbing the first term and taking the supremum in \(t\), we obtain
\eqref{eq:separated-approx-cauchy}. Finally,
\eqref{eq:small-kernel-error-to-zero} implies
\[
 E_\e-E_\sigma\to0
 \quad\text{in }L^{p'}(0,T;W^{-1,p'})
 \qquad\text{as }\e,\sigma\to0 .
\]
Thus \(\{u_\e\}_{0<\e\le r}\) is Cauchy in \(\mathcal X_T\). The lemma follows.
\end{proof}

We are now ready to prove the extension theorem.

\begin{proof}[Proof of Theorem~\ref{thm:extension-no-atom}]
For each \(0<\e\le r\), the measure \(\nu_\e\) is separated from zero.
Hence Theorem~\ref{thm:separated-wp} gives a unique weak solution \(u_\e\)
with kernel \(\nu_\e\) and data \((u_0,\phi,f)\).

By Lemma~\ref{lem:separated-approx-cauchy}, \(\{u_\e\}_{0<\e\le r}\) is
Cauchy in \(\mathcal X_T\). Hence there exists \(u\in\mathcal X_T\) such that
\begin{equation}
 u_\e\to u
 \quad\text{in }\mathcal X_T
 \qquad\text{as }\e\to0 .
 \label{eq:ueps-to-u-XT}
\end{equation}
Then \(u(0)=u_0\) in \(L^2\). Since the past history is fixed, we also have
\begin{equation}
 \bar u_\e\to\bar u
 \quad\text{in }L^p(-r,T;W^{1,p}_0).
 \label{eq:uepsbar-to-ubar}
\end{equation}

We next pass to the limit in the equation. By
Lemma~\ref{lem:uniform-truncation-estimate}, the family \(\{u_\e'\}\) is
bounded in \(L^{p'}(0,T;W^{-1,p'})\). Combining this bound with
\eqref{eq:ueps-to-u-XT}, and using the uniqueness of the weak derivative, we
get
\[
 u_\e'\rightharpoonup u'
 \quad\text{in }L^{p'}(0,T;W^{-1,p'})
 \qquad\text{as }\e\to0 .
\]
Since a monotone hemicontinuous operator is demicontinuous, see for instance
\cite[Chapter~II, Section~2]{Showalter1997MonotoneOperatorsBanachSpaceNonlinearPDE}, the convergence
\eqref{eq:ueps-to-u-XT} gives
\[
 \mathcal A_\kappa(u_\e)
 \rightharpoonup
 \mathcal A_\kappa(u)
 \quad\text{in }L^{p'}(0,T;W^{-1,p'}).
\]
Also, by Lemma~\ref{lem:Bnu-lip} and \eqref{eq:uepsbar-to-ubar},
\[
 \mathcal B_\nu\bar u_\e
 \to
 \mathcal B_\nu\bar u
 \quad\text{in }L^{p'}(0,T;W^{-1,p'}).
\]
By Lemma~\ref{lem:small-kernel-error},
\[
 u_\e'
 +
 \mathcal A_\kappa(u_\e)
 +
 \mathcal B_\nu\bar u_\e
 =
 f+E_\e ,
\]
and \(E_\e\to0\) in \(L^{p'}(0,T;W^{-1,p'})\). Passing to the limit gives
\[
 u'
 +
 \mathcal A_\kappa(u)
 +
 \mathcal B_\nu\bar u
 =
 f
 \quad\text{in }L^{p'}(0,T;W^{-1,p'}).
\]
Thus \(u\) is a weak solution of \eqref{eq:no-atom-problem} with initial
datum \(u_0\) and past history \(\phi\).

The estimate \eqref{eq:no-atom-estimate} follows from
Lemma~\ref{lem:uniform-truncation-estimate} and weak lower semicontinuity. Thus
\[
\begin{aligned}
 &\|u\|_{C([0,T];L^2)}^2
 +\|u\|_{L^p(0,T;W^{1,p}_0)}^p
 +\|u'\|_{L^{p'}(0,T;W^{-1,p'})}^{p'} \\
 \le{}&
 C\left(
 1+\|u_0\|_{L^2}^2
 +\|\phi\|_{L^p(-r,0;W^{1,p}_0)}^p
 +\|f\|_{L^{p'}(0,T;W^{-1,p'})}^{p'}
 \right).
\end{aligned}
\]

We finally show uniqueness. Let \(u\) and \(v\) be two weak solutions
with the same initial datum and the same past history. Set \(z:=u-v\).
Then \(z(0)=0\), and \(\bar u\) and \(\bar v\) have the same past history.
Subtracting the two equations gives
\[
 z'
 +
 \mathcal F_\mu\bar u
 -
 \mathcal F_\mu\bar v
 =
 0
 \quad\text{in }L^{p'}(0,T;W^{-1,p'}).
\]
Testing by \(z\), integrating over \((0,t)\), and using
Assumption~\ref{ass:pathwise-coercivity}, we obtain
\[
 \frac12\|z(t)\|_{L^2}^2
 +
 \gamma\int_0^t
 \|z(\tau)\|_{W^{1,p}_0}^p\,d\tau
 \le0 .
\]
Hence \(z=0\) on \([0,T]\), and uniqueness follows. The proof is complete.
\end{proof}

For the proof of the stability theorem, we first record a convergence lemma.
The lemma treats the atom at the origin and the residual part as one total kernel.

\begin{lemma}\label{lem:fixed-path-total-kernel}
Let \(\mu_n,\mu\in\M([0,r])\), with
\(\mu_n\stackrel{*}{\rightharpoonup}\mu\) in \(\M([0,r])\), and suppose
that \(\{\mu_n\}\) is bounded in \(\M([0,r])\). Put
\[
\begin{aligned}
 \kappa_n&:=\mu_n(\{0\}),&
 \nu_n&:=\mu_n-\kappa_n\delta_0,\\
 \kappa&:=\mu(\{0\}),&
 \nu&:=\mu-\kappa\delta_0 .
\end{aligned}
\]
If \(w_n\to w\) in \(L^p(-r,T;W^{1,p}_0)\), then
\begin{equation}
 \kappa_n\mathcal C(w_n)
 +\mathcal B_{\nu_n}w_n
 \to
 \kappa\mathcal C(w)
 +\mathcal B_\nu w
 \quad\text{in }L^{p'}(0,T;W^{-1,p'}).
 \label{eq:fixed-path-total-kernel}
\end{equation}
Here \(\mathcal C(w_n)\) denotes the function
\(t\mapsto\mathcal C(w_n(t))\) on \((0,T)\).
\end{lemma}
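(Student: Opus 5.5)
The idea is to rewrite both sides of \eqref{eq:fixed-path-total-kernel} as a single integral of a vector-valued function against the total kernel. For \(v\in L^p(-r,T;W^{1,p}_0)\) and \(s\in[0,r]\), set
\[
 \Phi_v(s):=\mathcal C(v(\cdot-s))\in L^{p'}(0,T;W^{-1,p'}),
\]
restricted to \((0,T)\). Then \(\Phi_v(0)=\mathcal C(v)\) on \((0,T)\), and by \eqref{eq:Bnu-def} and \eqref{eq:Bnu-dual-def}
\[
 \kappa_n\mathcal C(w_n)+\mathcal B_{\nu_n}w_n=\int_{[0,r]}\Phi_{w_n}(s)\,d\mu_n(s).
\]
The integral is understood in the Bochner sense in \(L^{p'}(0,T;W^{-1,p'})\). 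The atom at \(0\) contributes \(\kappa_n\Phi_{w_n}(0)\), and the rest is exactly \(\mathcal B_{\nu_n}w_n\). Pairing with test functions \(\psi\) shows this agrees with the definition. The same holds for \(\mu\) and \(w\). I then split
\[
 \int\Phi_{w_n}\,d\mu_n-\int\Phi_w\,d\mu
 =\int\bigl(\Phi_{w_n}-\Phi_w\bigr)\,d\mu_n+\Bigl(\int\Phi_w\,d\mu_n-\int\Phi_w\,d\mu\Bigr).
\]

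\textbf{First term.} I argue exactly as in Lemma~\ref{lem:Bnu-lip}, now including the point \(s=0\). Since \(w_n\to w\), the quantity \(\|w_n\|+\|w\|\) in \(L^p(-r,T;W^{1,p}_0)\) is bounded by some \(R\). Every shift satisfies \(\|w_n(\cdot-s)\|_{L^p(0,T;W^{1,p}_0)}\le\|w_n\|_{L^p(-r,T;W^{1,p}_0)}\), and similarly for \(w\). Hence \eqref{eq:C-local-lip} with \(J=(0,T)\) gives
\[
 \Bigl\|\int(\Phi_{w_n}-\Phi_w)\,d\mu_n\Bigr\|
 \le L_{(0,T),R}\,\sup_m\|\mu_m\|_{\M([0,r])}\,\|w_n-w\|_{L^p(-r,T;W^{1,p}_0)}\to0 .
\]

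\textbf{Second term.} This is a vector-valued weak-star convergence statement for the fixed integrand \(\Phi_w\). The first step is to show that \(s\mapsto\Phi_w(s)\) is continuous from \([0,r]\) into \(L^{p'}(0,T;W^{-1,p'})\). By \eqref{eq:C-local-lip} again,
\[
 \|\Phi_w(s)-\Phi_w(s_0)\|\le L\,\|w(\cdot-s)-w(\cdot-s_0)\|_{L^p(0,T;W^{1,p}_0)},
\]
and the right-hand side tends to \(0\) as \(s\to s_0\) by continuity of translations in \(L^p\). To see this, extend \(w\) by zero outside \((-r,T)\); for \(t\in(0,T)\) and \(s\in[0,r]\) only the values of \(w\) on \((-r,T)\) are ever used, so the extension is harmless.

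Since \([0,r]\) is compact, \(\Phi_w\) is uniformly continuous. Given \(\epsilon>0\), take a finite continuous partition of unity \(\{\chi_k\}\) subordinate to a fine cover, with points \(s_k\), such that
\[
 \sup_s\Bigl\|\Phi_w(s)-\sum_k\chi_k(s)\Phi_w(s_k)\Bigr\|\le\epsilon .
\]
Then
\[
 \Bigl\|\int\Phi_w\,d(\mu_n-\mu)\Bigr\|
 \le\epsilon\bigl(\sup_m\|\mu_m\|+\|\mu\|\bigr)
 +\sum_k\Bigl|\int\chi_k\,d\mu_n-\int\chi_k\,d\mu\Bigr|\,\|\Phi_w(s_k)\| .
\]
The finite sum tends to \(0\) by the scalar weak-star convergence, since each \(\chi_k\in C([0,r])\). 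Letting \(n\to\infty\) and then \(\epsilon\to0\) gives the claim.

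\textbf{Main obstacle.} I expect the delicate step to be the continuity of \(s\mapsto\Phi_w(s)\) up to and including \(s=0\). This is what allows mass of \(\mu_n\) sitting at small positive delays to converge to the atom of \(\mu\) at the origin. It needs both the translation continuity and the Nemytskii-type Lipschitz bound \eqref{eq:C-local-lip}, so I would verify it carefully. A further point to check is that the Bochner-integral representation coincides with the pointwise definition \eqref{eq:Bnu-dual-def}; this follows from measurability of \(\Phi_w\), which holds because it is continuous, together with testing against \(\psi\in L^p(0,T;W^{1,p}_0)\).
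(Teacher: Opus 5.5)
Your proposal is correct and follows essentially the same route as the paper. It uses the same split into a part controlled by \eqref{eq:C-local-lip} together with $\sup_n\|\mu_n\|_{\M([0,r])}$, and a fixed-path part handled through continuity of $s\mapsto\mathcal C(w(\cdot-s))$ into $L^{p'}(0,T;W^{-1,p'})$, uniform approximation by finite sums $\sum_j\chi_j(s)x_j$, and scalar weak-star convergence. The only difference is that you spell out the translation-continuity argument and the partition-of-unity construction, which the paper leaves implicit.
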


\begin{proof}
Put
\[
 X:=L^{p'}(0,T;W^{-1,p'}),
 \qquad
 M:=\sup_n\|\mu_n\|_{\M([0,r])}.
\]
For each \(n\), we write
\[
 \kappa_n\mathcal C(w_n)+\mathcal B_{\nu_n}w_n
 =
 \int_{[0,r]}\mathcal C(w_n(\cdot-s))\,d\mu_n(s)
 \quad\text{in }X.
\]
The same identity holds for \(\mu\) and \(w\).

We first estimate the part coming from \(w_n-w\). Since
\(w_n\to w\) in \(L^p(-r,T;W^{1,p}_0)\), the sequence is bounded there.
Choose \(R>0\) such that
\[
 \|w_n\|_{L^p(-r,T;W^{1,p}_0)}
 +\|w\|_{L^p(-r,T;W^{1,p}_0)}
 \le R
\]
for all \(n\). By \eqref{eq:C-local-lip}, after shifting in time,
\[
 \|\mathcal C(w_n(\cdot-s))-\mathcal C(w(\cdot-s))\|_X
 \le
 L_{(0,T),R}\|w_n-w\|_{L^p(-r,T;W^{1,p}_0)}
\]
for every \(s\in[0,r]\). Hence
\[
\begin{aligned}
&\left\|
 \int_{[0,r]}
 \{\mathcal C(w_n(\cdot-s))-\mathcal C(w(\cdot-s))\}
 \,d\mu_n(s)
\right\|_X \\
\le{}&
 M L_{(0,T),R}
 \|w_n-w\|_{L^p(-r,T;W^{1,p}_0)}
 \to0 .
\end{aligned}
\]

We now treat the remaining part, where \(w\) is fixed and the measure varies. Define
\[
 g(s):=\mathcal C(w(\cdot-s))\in X,
 \qquad 0\le s\le r .
\]
By \eqref{eq:C-local-lip}, we have \(g\in C([0,r];X)\). Approximate \(g\)
uniformly by finite sums
\[
 g_N(s)=\sum_{j=1}^N \chi_j(s)x_j,
 \qquad
 \chi_j\in C([0,r]),\quad x_j\in X .
\]
For such \(g_N\), the convergence follows from the scalar weak-star
convergence of \(\mu_n\) to \(\mu\). Passing to the uniform limit \(g_N\to g\), we obtain
\[
 \int_{[0,r]}g(s)\,d\mu_n(s)
 \to
 \int_{[0,r]}g(s)\,d\mu(s)
 \quad\text{in }X .
\]
Combining this convergence with the preceding estimate proves
\eqref{eq:fixed-path-total-kernel}. 
\end{proof}

We now prove the stability theorem. The key point is to compare \(u_n\) with
an auxiliary path having the same past history.

\begin{proof}[Proof of Theorem~\ref{thm:weakstar-stability}]
By Theorem~\ref{thm:extension-no-atom}, there exist unique weak solutions
\(u_n\) and \(u\). Define
\[
 \widetilde u_n(t):=
 \begin{cases}
  \phi_n(t), & -r<t<0,\\
  u(t), & 0\le t\le T.
 \end{cases}
\]
Then \(\bar u_n\) and \(\widetilde u_n\) have the same past history
\(\phi_n\), and
\[
 \widetilde u_n\to\bar u
 \quad\text{in }L^p(-r,T;W^{1,p}_0).
\]
Set \(z_n:=u_n-u\) and define
\[
 G_n:=f_n-f-
 \left(
  \mathcal F_{\mu_n}\widetilde u_n-
  \mathcal F_\mu\bar u
 \right).
\]
Then the equations for \(u_n\) and \(u\) give
\begin{equation}
 z_n'
 +\mathcal F_{\mu_n}\bar u_n
 -\mathcal F_{\mu_n}\widetilde u_n
 =G_n .
 \label{eq:weakstar-difference-equation}
\end{equation}
We claim that
\begin{equation}
 G_n\to0
 \quad\text{in }L^{p'}(0,T;W^{-1,p'}).
 \label{eq:Gn-to-zero}
\end{equation}
The convergence \(f_n\to f\) is assumed. Since \(\widetilde u_n(t)=\bar u(t)=u(t)\) for \(0<t<T\), the definition
\(\mathcal A_\kappa=\mathcal A_{\rm raw}+\kappa\mathcal C\) gives
\[
\begin{aligned}
 \mathcal F_{\mu_n}\widetilde u_n-
 \mathcal F_\mu\bar u
 =
 \kappa_n\mathcal C(\widetilde u_n)
 +\mathcal B_{\nu_n}\widetilde u_n 
 -
 \kappa\mathcal C(\bar u)
 -\mathcal B_\nu\bar u .
\end{aligned}
\]
By Lemma~\ref{lem:fixed-path-total-kernel}, this term converges to zero in
\(L^{p'}(0,T;W^{-1,p'})\). Thus \eqref{eq:Gn-to-zero} holds.

Since \(\bar u_n\) and \(\widetilde u_n\) have the same past history,
Assumption~\ref{ass:pathwise-coercivity} applied to \(\mu_n\) gives, after
testing \eqref{eq:weakstar-difference-equation} by \(z_n\) and integrating
over \((0,t)\),
\[
\begin{aligned}
 &\frac12\|z_n(t)\|_{L^2}^2
 +\gamma\int_0^t\|z_n(\tau)\|_{W^{1,p}_0}^p\,d\tau\\
 \le{}&
 \frac12\|u_{0,n}-u_0\|_{L^2}^2 
 +
 \int_0^t\dual{G_n(\tau)}{z_n(\tau)}\,d\tau .
\end{aligned}
\]
Young's inequality yields
\[
 \|z_n\|_{C([0,T];L^2)}^2
 +\|z_n\|_{L^p(0,T;W^{1,p}_0)}^p
 \le
 C\left(
  \|u_{0,n}-u_0\|_{L^2}^2
  +\|G_n\|_{L^{p'}(0,T;W^{-1,p'})}^{p'}
 \right).
\]
The right-hand side tends to zero by \(u_{0,n}\to u_0\) and
\eqref{eq:Gn-to-zero}. Therefore
\[
 u_n\to u
 \quad\text{in }\mathcal X_T.
\]
The proof is complete.
\end{proof}

\section{Examples and verification}
\label{sec:examples}

In this final section, we verify the abstract hypotheses for two classes of
Laplacian-type delay terms. The first class consists of ordinary
\(p\)-Laplacian delays with residual kernels separated from the origin.
The second class treats residual kernels that may reach the origin, including
collapsing delayed atoms, in a regularized setting and under a smallness
condition on the residual parts of the kernels.

We begin with the separated case.

\begin{example}
\label{ex:p-laplace-separated}
Let \(p\ge2\), \(a>0\), and set
\[
 \mathcal C_p(u):=
 -\diver\bigl(|\nabla u|^{p-2}\nabla u\bigr),
 \qquad
 \mathcal A_0(u):=a\mathcal C_p(u).
\]
We take \(I=\{0\}\) and \(\kappa=0\). Let
\(\rho\in\M([0,r])\) satisfy
\[
 \supp|\rho|\subset[\tau_*,r]
\]
for some \(\tau_*>0\). Then the equation is
\[
 u'(t)+a\mathcal C_p(u(t))
 +\int_{(0,r]}\mathcal C_p(\bar u(t-s))\,d\rho(s)=f(t).
\]

For \(v\in W^{1,p}_0\), H\"older's inequality gives
\[
\begin{aligned}
 \left|\dual{\mathcal C_p(u)}{v}\right|
 &=
 \left|
 \int_\Omega |\nabla u|^{p-2}\nabla u\cdot\nabla v\,dx
 \right|  \\
 &\le
 \bigl\||\nabla u|^{p-1}\bigr\|_{L^{p'}}
 \|\nabla v\|_{L^p}  \\
 &=
 \|\nabla u\|_{L^p}^{p-1}\|\nabla v\|_{L^p}
 \le
 C\|u\|_{W^{1,p}_0}^{p-1}\|v\|_{W^{1,p}_0}.
\end{aligned}
\]
Taking the supremum over \(\|v\|_{W^{1,p}_0}\le1\), we obtain
\[
 \|\mathcal C_p(u)\|_{W^{-1,p'}}
 \le
 C\bigl(1+\|u\|_{W^{1,p}_0}^{p-1}\bigr).
\]
Let \(J\subset\R\) be a bounded interval and let
\(w,z\in L^p(J;W^{1,p}_0)\). For fixed \(t\in J\) and
\(\psi\in W^{1,p}_0\), we estimate
\[
\begin{aligned}
&\left|
 \dual{\mathcal C_p(w(t))-\mathcal C_p(z(t))}{\psi}
\right|  \\
={}&
\left|
 \int_\Omega
 \bigl(
 |\nabla w(t)|^{p-2}\nabla w(t)
 -
 |\nabla z(t)|^{p-2}\nabla z(t)
 \bigr)\cdot\nabla\psi\,dx
\right|  \\
\le{}&
 C\int_\Omega
 \bigl(|\nabla w(t)|+|\nabla z(t)|\bigr)^{p-2}
 |\nabla(w-z)(t)|\,|\nabla\psi|\,dx  \\
\le{}&
 C
 \bigl(
  \|w(t)\|_{W^{1,p}_0}
  +\|z(t)\|_{W^{1,p}_0}
 \bigr)^{p-2}
 \|w(t)-z(t)\|_{W^{1,p}_0}
 \|\psi\|_{W^{1,p}_0}.
\end{aligned}
\]
Taking the supremum over \(\|\psi\|_{W^{1,p}_0}\le1\), we get
\[
\begin{aligned}
&\|\mathcal C_p(w(t))-\mathcal C_p(z(t))\|_{W^{-1,p'}}  \\
\le{}&
 C
 \bigl(
  \|w(t)\|_{W^{1,p}_0}
  +\|z(t)\|_{W^{1,p}_0}
 \bigr)^{p-2}
 \|w(t)-z(t)\|_{W^{1,p}_0}.
\end{aligned}
\]
Applying H\"older's inequality,
we obtain
\[
\begin{aligned}
&\|\mathcal C_p(w)-\mathcal C_p(z)\|_{L^{p'}(J;W^{-1,p'})}  \\
\le{}&
 C
 \left(
  \|w\|_{L^p(J;W^{1,p}_0)}
  +\|z\|_{L^p(J;W^{1,p}_0)}
 \right)^{p-2}
 \|w-z\|_{L^p(J;W^{1,p}_0)} .
\end{aligned}
\]
The same estimate, applied to time-independent functions, gives the
continuity of \(\mathcal C_p:W^{1,p}_0\to W^{-1,p'}\). Hence
\(\mathcal C_p\) satisfies Assumption~\ref{ass:delayed-flux}.

Furthermore,
\[
\begin{aligned}
&\dual{\mathcal A_0(u)-\mathcal A_0(v)}{u-v}  \\
={}&
 a\int_\Omega
 \bigl(
 |\nabla u|^{p-2}\nabla u
 -
 |\nabla v|^{p-2}\nabla v
 \bigr)\cdot\nabla(u-v)\,dx  \\
\ge{}&
c\|u-v\|_{W^{1,p}_0}^{p}.
\end{aligned}
\]
The growth estimate for \(\mathcal A_0\) follows from the estimate for
\(\mathcal C_p\):
\[
 \|\mathcal A_0(u)\|_{W^{-1,p'}}
 =
 a\|\mathcal C_p(u)\|_{W^{-1,p'}}
 \le
 C\bigl(1+\|u\|_{W^{1,p}_0}^{p-1}\bigr).
\]
Moreover, by Poincaré's inequality,
\[
 \dual{\mathcal A_0(u)}{u}
 =
 a\int_\Omega |\nabla u|^p\,dx
 \ge
 c a\|u\|_{W^{1,p}_0}^p .
\]
The hemicontinuity of \(\mathcal A_0\) is immediate from the continuity of
\(\xi\mapsto |\xi|^{p-2}\xi\). Thus \(\mathcal A_0\) satisfies
Assumption~\ref{ass:current}. Since \(\rho\) is separated from the origin, Theorem~\ref{thm:separated-wp}
implies that, for every admissible \(f,u_0,\phi\), the above problem has
a unique weak solution with initial datum \(u_0\) and past history
\(\phi\).
\end{example}

We next consider the case where residual kernels may reach the origin.
In the following regularized setting, Assumption~\ref{ass:pathwise-coercivity}
is verified under a smallness condition on the total variation norm of the
residual parts.

\begin{example}
\label{ex:collapsing-atom}
Let \(m>2\). Put \(q:=2m-2\) and \(q':=q/(q-1)\).
We apply the preceding results with \(q\) in place of \(p\), and write
\[
 \mathcal X_T^{(q)}
 :=
 C([0,T];L^2)\cap L^q(0,T;W^{1,q}_0).
\]
Set
\[
 \mathcal C_m(u):=
 -\diver\bigl(|\nabla u|^{m-2}\nabla u\bigr)
\]
and
\[
 \mathcal R_\varepsilon(u)
 :=
 -\diver\left(
 (\varepsilon^2+|\nabla u|^2)^{\frac{q-2}{2}}\nabla u
 \right),
 \qquad \varepsilon>0 .
\]
Fix \(a\in\R\), \(\beta>0\), and a bounded closed interval
\(I\subset[-a,\infty)\). For \(\kappa\in I\), set
\[
 \mathcal A_\kappa(u)
 :=
 \beta\mathcal R_\varepsilon(u)
 +(a+\kappa)\mathcal C_m(u).
\]
For a total kernel \(\mu\), set
\[
 \kappa_\mu:=\mu(\{0\}),
 \qquad
 \nu_\mu:=\mu-\kappa_\mu\delta_0,
\]
and assume \(\kappa_\mu\in I\). The corresponding equation is
\[
 u'(t)+\beta\mathcal R_\varepsilon(u(t))
 +(a+\kappa_\mu)\mathcal C_m(u(t))
 +(\mathcal B_{\nu_\mu}\bar u)(t)=f(t),
\]
where \(\mathcal B_{\nu_\mu}\) is defined with \(\mathcal C=\mathcal C_m\).

We first verify Assumption~\ref{ass:delayed-flux} for \(\mathcal C_m\).
Since \(q'(m-1)\le q\), the proof of Example~\ref{ex:p-laplace-separated}
gives the analogue of \eqref{eq:C-growth} with \(q\) in place of \(p\).
The same argument, using the pointwise Lipschitz estimate for the
\(m\)-Laplacian, gives the analogue of \eqref{eq:C-local-lip}. The continuity
of \(\mathcal C_m:W^{1,q}_0\to W^{-1,q'}\) follows in the same way. Thus
\(\mathcal C_m\) satisfies Assumption~\ref{ass:delayed-flux} with \(q\) in
place of \(p\).

Next, we use the inequality
\[
\begin{aligned}
&\left[
 (\varepsilon^2+|\xi|^2)^{\frac{q-2}{2}}\xi
 -
 (\varepsilon^2+|\eta|^2)^{\frac{q-2}{2}}\eta
\right]\cdot(\xi-\eta)  \\
\ge{}&
 c(\varepsilon^2+|\xi|^2+|\eta|^2)^{\frac{q-2}{2}}
 |\xi-\eta|^2 ,
\end{aligned}
\]
valid for \(\xi,\eta\in\R^d\). Applying this, we obtain
\[
\begin{aligned}
&\dual{
 \mathcal R_\varepsilon(u)-\mathcal R_\varepsilon(v)
}{u-v}  \\
\ge{}&
 c\int_\Omega
 (\varepsilon^2+|\nabla u|^2+|\nabla v|^2)^{\frac{q-2}{2}}
 |\nabla(u-v)|^2\,dx  \\
\ge{}&
 c\|u-v\|_{W^{1,q}_0}^{q}.
\end{aligned}
\]
The last inequality follows from
\[
 |\nabla(u-v)|^2
 \le
 2\bigl(|\nabla u|^2+|\nabla v|^2\bigr)
\]
and Poincaré's inequality.
Since \(\mathcal C_m\) is monotone and \(a+\kappa\ge0\), we obtain
\[
\begin{aligned}
&\dual{
 \mathcal A_\kappa(u)-\mathcal A_\kappa(v)
}{u-v}  \\
={}&
 \beta\dual{
 \mathcal R_\varepsilon(u)-\mathcal R_\varepsilon(v)
}{u-v}
 +(a+\kappa)
 \dual{
 \mathcal C_m(u)-\mathcal C_m(v)
}{u-v}  \\
\ge{}&
 c\|u-v\|_{W^{1,q}_0}^{q}.
\end{aligned}
\]
Moreover,
\[
 \|\mathcal R_\varepsilon(u)\|_{W^{-1,q'}}
 \le
 C\bigl(1+\|u\|_{W^{1,q}_0}^{q-1}\bigr).
\]
Together with the estimate for \(\mathcal C_m\), and since \(I\) is bounded,
this gives
\[
 \|\mathcal A_\kappa(u)\|_{W^{-1,q'}}
 \le
 C\bigl(1+\|u\|_{W^{1,q}_0}^{q-1}\bigr)
\]
uniformly in \(\kappa\in I\). Also,
\[
\begin{aligned}
 \dual{\mathcal A_\kappa(u)}{u}
 &=
 \beta\dual{\mathcal R_\varepsilon(u)}{u}
 +(a+\kappa)\dual{\mathcal C_m(u)}{u}  \\
 &\ge
 c\|u\|_{W^{1,q}_0}^{q}.
\end{aligned}
\]
The hemicontinuity follows from the continuity of the corresponding vector
fields. Thus \(\mathcal A_\kappa\) satisfies
Assumption~\ref{ass:current} with \(q\) in place of \(p\).

We next verify Assumption~\ref{ass:pathwise-coercivity} for small residual kernels. 
There exists \(m_0>0\), depending on \(m,\beta,\varepsilon\) and
\(\Omega\), such that the condition holds for every family
\(\KM\subset\M([0,r])\) satisfying
\[
 \kappa_\mu\in I\quad(\mu\in\KM),
 \qquad
 \sup_{\mu\in\KM}\|\nu_\mu\|_{\M([0,r])}\le m_0 .
\]
Indeed, let \(y=w-z\). For \(\tau\ge0\), set
\[
 E(\tau):=
 \int_\Omega
 (\varepsilon^2+|\nabla w(\tau)|^2+|\nabla z(\tau)|^2)^{\frac{q-2}{2}}
 |\nabla y(\tau)|^2\,dx ,
\]
and put \(E(\tau)=0\) for \(\tau<0\). Then
\[
 \dual{
 \mathcal R_\varepsilon(w(\tau))
 -\mathcal R_\varepsilon(z(\tau))
 }{y(\tau)}
 \ge cE(\tau).
\]
Moreover, \(E(\tau)\) controls both
\(\|\nabla y(\tau)\|_{L^2}^2\) and
\(\|y(\tau)\|_{W^{1,q}_0}^q\).

For the term containing \(\mathcal B_{\nu_\mu}\), the pointwise estimate for
the \(m\)-Laplacian and Young's inequality give
\[
\begin{aligned}
&\left|
\dual{
 \mathcal C_m(w(\tau-s))-\mathcal C_m(z(\tau-s))
}{
 y(\tau)
}
\right|  \\
&\quad\le
 C\bigl(E(\tau-s)+E(\tau)\bigr).
\end{aligned}
\]
Here \(E(\tau-s)=0\) for \(\tau-s<0\), because the two paths have the same
history. Hence
\[
\begin{aligned}
&\int_0^t
 \left|
 \dual{
  (\mathcal B_{\nu_\mu}w)(\tau)-(\mathcal B_{\nu_\mu}z)(\tau)
 }{
  y(\tau)
 }
 \right|\,d\tau  \\
\le{}&
 C\|\nu_\mu\|_{\M([0,r])}
 \int_0^t E(\tau)\,d\tau .
\end{aligned}
\]
Combining this with the estimate for \(\mathcal R_\varepsilon\) and using
\(a+\kappa_\mu\ge0\), we get
\[
\begin{aligned}
&\int_0^t
 \dual{
 \mathcal F_\mu w(\tau)-\mathcal F_\mu z(\tau)
 }{
  y(\tau)
 }\,d\tau  \\
\ge{}&
 \left(c-C\|\nu_\mu\|_{\M([0,r])}\right)
 \int_0^t E(\tau)\,d\tau .
\end{aligned}
\]
Choosing \(m_0>0\) sufficiently small gives
\[
\int_0^t
 \dual{
 \mathcal F_\mu w(\tau)-\mathcal F_\mu z(\tau)
 }{
  y(\tau)
 }\,d\tau
 \ge
 \gamma\int_0^t \|y(\tau)\|_{W^{1,q}_0}^{q}\,d\tau
\]
for some \(\gamma>0\), uniformly in \(\mu\in\KM\). Thus
Assumption~\ref{ass:pathwise-coercivity} holds.

We now consider collapsing delayed atoms. Let
\[
 \kappa_n\to\kappa_0,\qquad c_n\to c,\qquad \sigma_n\to0,
 \qquad 0<\sigma_n\le r,
\]
and let \(\nu\in\M([0,r])\) satisfy \(|\nu|(\{0\})=0\). Set
\[
 \mu_n:=\kappa_n\delta_0+c_n\delta_{\sigma_n}+\nu,
 \qquad
 \mu:=(\kappa_0+c)\delta_0+\nu .
\]
Then \(\mu_n\stackrel{*}{\rightharpoonup}\mu\) in \(\M([0,r])\), and
\(\sup_n\|\mu_n\|_{\M([0,r])}<\infty\). Assume that \(\kappa_n\in I\) for every \(n\), that
\(\kappa_0+c\in I\), and that the family
\[
 \KM:=\{\mu\}\cup\{\mu_n:n\in\mathbb N\}
\]
satisfies the above smallness condition. For initial data, past histories,
and forcing terms satisfying the convergence hypotheses of
Theorem~\ref{thm:weakstar-stability}, that theorem, with \(q\) in place of
\(p\), implies that the corresponding solutions of
\[
\begin{aligned}
 &u_n'(t)+\beta\mathcal R_\varepsilon(u_n(t))
 +(a+\kappa_n)\mathcal C_m(u_n(t))  \\
 +&c_n\mathcal C_m(\bar u_n(t-\sigma_n))
 +(\mathcal B_\nu\bar u_n)(t)=f_n(t)
\end{aligned}
\]
converge in \(\mathcal X_T^{(q)}\) to the solution of
\[
 u'(t)+\beta\mathcal R_\varepsilon(u(t))
 +(a+\kappa_0+c)\mathcal C_m(u(t))
 +(\mathcal B_\nu\bar u)(t)=f(t).
\]
Thus the delayed atom \(c_n\delta_{\sigma_n}\) is absorbed into the
present-time \(m\)-Laplacian coefficient in the limit.
\end{example}

\begin{remark}
The regularized term in Example~\ref{ex:collapsing-atom} is used to
verify Assumption~\ref{ass:pathwise-coercivity} for residual kernels
that may reach the origin. For the unregularized operator
\[
 \mathcal A_\kappa(u)=(a+\kappa)\mathcal C_m(u),
\]
the corresponding coercivity estimate is not verified here.
\end{remark}

\section*{Acknowledgments}
This work was supported in part by JSPS KAKENHI Grant Numbers JP23H00085 and JP26K17020.

\end{document}